\newtheorem{theorem}{Theorem}[section]
\numberwithin{theorem}{section}
\newtheorem{lemma}[theorem]{Lemma}
\newtheorem{proposition}[theorem]{Proposition}
\newtheorem{corollary}[theorem]{Corollary}
\newtheorem{remark}[theorem]{Remark}
\DeclareMathOperator{\E}{\mathds{E}}
\DeclareMathOperator{\Var}{Var}
\DeclareMathOperator{\N}{\mathbb{N}}
\DeclareMathOperator{\R}{\mathbb{R}}
\DeclareMathOperator{\1}{\mathds{1}}
\DeclareMathOperator{\supp}{supp}
\DeclareMathOperator{\dom}{dom}
\DeclareMathOperator{\Po}{Po}
\newcommand{\ellnorm}[3]{\Vert{#3}\Vert_{\ell^{#1}(\N)^{\otimes{#2}}}}
\newcommand{\absolute}[1]{\vert{#1}\vert}
\newcommand{\Bigabsolute}[1]{\Big\vert{#1}\Big\vert}
\newcommand{\norm}[1]{\|{#1}\|}
\date{\today}
\numberwithin{equation}{section}
\title[Poisson approximation of Rademacher functionals]{Poisson approximation of Rademacher functionals by the Chen-Stein method and Malliavin calculus}
\author[K. Krokowski]{Kai Krokowski}
\address{Kai Krokowski: Faculty of Mathematics, NA 3/28, Ruhr University Bochum, Germany}
\email{kai.krokowski@rub.de}
\keywords{Bernoulli processes, Chen-Stein method, Malliavin calculus, Malliavin-Stein method, Poisson approximation, second order Poincar\'{e} inequality}
\subjclass[2010]{60F05, 60G57, 60H07}
\date{}
\begin{document}

\begin{abstract}
New bounds on the total variation distance between the law of integer valued functionals of possibly non-symmetric and non-homogeneous infinite Rademacher sequences and the Poisson distribution are established. They are based on a combination of the Chen-Stein method and a discrete version of Malliavin calculus. We give some applications to shifted discrete multiple stochastic integrals.
\end{abstract}

\maketitle

\section{Introduction}
Stein's method and the Malliavin calculus have been combined for the first time by Nourdin and Peccati in the initial paper \cite{NouPec} in order to derive explicit bounds on the error in the normal and Gamma approximation of functionals of general Gaussian processes. This new approach to Stein's method, also known as the Malliavin-Stein method, has also been used to deduce quantitative central limit theorems for functionals of general Poisson measures (see \cite{PecSolTaqUtz}) and for functionals of infinite symmetric Rademacher sequences (see \cite{NouPecRei} and \cite{KroReiThae1}). Here, the term symmetric Rademacher sequence refers to a sequence of independent and identically distributed random variables taking the values $+1$ and $-1$ with probability $1/2$ each.\medskip

The results in \cite{NouPecRei} and \cite{KroReiThae1} are based on a product formula for multiple stochastic integrals (see Proposition 2.9 in \cite{NouPecRei}), whose proof relies on the simplicity of the underlying symmetric Rademacher sequence. The findings of \cite{KroReiThae1} were further developed in \cite{KroReiThae2}, where a second order Poincar\'{e} type bound on the Kolmogorov distance between the law of functionals of possibly non-symmetric and non-homogeneous infinite Rademacher sequences and the standard normal distribution was derived. For analogues of such second order Poincar\'{e} type inequalities in the Gaussian and Poisson case see \cite{NouPecReiSecOrdPoi} and \cite{LasPecSchu}, respectively. One advantage of the bound in \cite{KroReiThae2} is that it can be further evaluated without the need of a product formula for multiple stochastic integrals.\medskip

Poisson approximation by a combination of the Chen-Stein method and Malliavin calculus has first been tackled in \cite{PecPoi}, where the author computed explicit bounds on the total variation distance between the law of integer valued functionals of general Poisson measures and a Poisson distribution. Furthermore, sufficient conditions for the convergence in distribution of suitably shifted multiple stochastic integrals to a Poisson random variable and rates of convergence for the Poisson approximation of statistics associated with geometric random graphs were covered. For further works in the framework of the Chen-Stein method and Malliavin calculus see, e.g., \cite{BouPec}, \cite{Tor1} and \cite{Tor2}.\medskip

The purpose of this paper is to combine the Chen-Stein method and a discrete version of Malliavin calculus (as developed in \cite{Pri2}), and thus, to continue the work of \cite{KroReiThae1} and \cite{KroReiThae2} to the case of Poisson approximation. A general bound on the total variation distance between the law of integer valued functionals of possibly non-symmetric and non-homogeneous infinite Rademacher sequences and the Poisson distribution is shown (see Theorem \ref{Main theorem}). Applications to shifted multiple stochastic integrals are considered (see Theorem \ref{J_1 theorem} and Corollary \ref{Bernoulli sums corollary} as well as Theorem \ref{J_m theorem} and Corollary \ref{J_2 corollary}).
For this, a generalization of the product formula from \cite{NouPecRei} to multiple stochastic integrals based on an underlying possibly non-symmetric and non-homogeneous Rademacher sequence is proved (see Proposition \ref{Multiplication formula proposition}). In addition, using the techniques provided in \cite{KroReiThae2}, a second order Poincar\'{e} type inequality is deduced from the general bound (see Theorem \ref{Second order Poincare inequality}).\medskip

The remainder of this paper is built up as follows. Section 2 collects the bases of the discrete Malliavin calculus as well as the product formula for multiple stochastic integrals. Furthermore, a short introduction to the Chen-Stein method is given. Section 3 contains all of the main results and their proofs. Section 4 serves as appendix and bears the proof of the product formula and, additionally, a standard approximation argument that is used within some of the proofs in this paper.\medskip

The authors of \cite{PriTor} have also developed bounds on the total variation distance between the law of integer valued functionals of possibly non-symmetric and non-homogeneous infinite Rademacher sequences and the Poisson distribution by using a generalization of the product formula for multiple stochastic integrals in \cite{NouPecRei} as well. In particular, Theorem \ref{Main theorem} and Corollary \ref{Main corollary} here are related to Theorem 6.3 in \cite{PriTor}, Theorem \ref{J_1 theorem} and Corollary \ref{Bernoulli sums corollary} are related to Theorem 7.1 in \cite{PriTor}, and Theorem \ref{J_m theorem} and Remark \ref{J_2 remark} are related to Theorem 8.2 and Proposition 8.3, respectively, in \cite{PriTor}. However, the corresponding results of \cite{PriTor} and this paper were worked out independently of each other and differ (see, e.g., Remark \ref{PriTor remark}). In addition, we contribute a second order Poincar\'{e} type bound which is not provided in \cite{PriTor}.

\section{Preliminaries}

\subsection{Rademacher sequences}
Let $p:=(p_k)_{k \in \N}$ be a sequence of success probabilities fulfilling $0 < p_k <1$, for every $k \in \N$, and let $q:=(q_k)_{k \in \N}$ be the corresponding sequence of failure probabilities with $q_k := 1-p_k$, for every $k \in \N$. Furthermore, let $(\Omega, \mathcal{F}, P)$ be a probability space with $\Omega:=\{-1,+1\}^{\N}$, $\mathcal{F}:=\mathcal{P}(\{-1,+1\})^{\otimes \N}$ and $P:=\bigotimes_{k=1}^\infty (p_k \delta_{+1} + q_k \delta_{-1})$. Now, let $X:=(X_k)_{k\in\N}$ be a sequence of independent random variables defined on $(\Omega, \mathcal{F}, P)$ by $X_k(\omega):=\omega_k$, for every $k \in \N$ and $\omega := (\omega_k)_{k\in\N} \in \Omega$. Here, we refer to the sequence $X$ as (possibly non-symmetric and non-homogeneous) Rademacher sequence.
In the following, we will introduce discrete multiple stochastic integrals on the basis of our Rademacher sequence. To this end, we also define the standardized sequence $Y := (Y_k)_{k\in\N}$ with
\begin{align*}
Y_k := (X_k-\E[X_k])/\sqrt{\Var(X_k)} = (X_k-p_k+q_k)/(2\sqrt{p_kq_k}),
\end{align*}
for every $k \in \N$.

\subsection{Kernels and contractions}
Let $\kappa$ be the counting measure on $\N$. We put $\ell^2(\N)^{\otimes n} := L^2(\N^n,\mathcal{P}(\N)^{\otimes n}, \kappa^{\otimes n})$, for every $n \in \N$. In the following, we refer to the elements of $\ell^2(\N)^{\otimes n}$ as kernels. Let $\ell^2(\N)^{\circ n}$ denote the subset of $\ell^2(\N)^{\otimes n}$ of symmetric kernels. Furthermore, let $\ell_0^2(\N)^{\otimes n}$ denote the subset of kernels vanishing on diagonals, i.e.\ vanishing on the complement of the set $\Delta_n:=\{ (i_1, \dotsc, i_n) \in \N^n : \text{$i_j \neq i_k$ for $j \neq k$} \}$. We then put $\ell_0^2(\N)^{\circ n} := \ell^2(\N)^{\circ n} \cap \ell_0^2(\N)^{\otimes n}$. For $n,m \in \N$, take two kernels $f \in \ell_0^2(\N)^{\circ n}$ and $g \in \ell_0^2(\N)^{\circ m}$. Now, for $r=0, \dotsc, n \wedge m$ and $\ell = 0, \dotsc, r$, the contraction of $f$ and $g$ is defined by
\begin{align*}
&f \star_r^\ell g (i_1, \dotsc, i_{n-r}, k_1, \dotsc, k_{r-\ell}, j_1, \dotsc, j_{m-r})\\
&:=\sum_{(a_1, \dotsc, a_\ell) \in \Delta_\ell} f(i_1, \dotsc, i_{n-r}, k_1, \dotsc, k_{r-\ell}, a_1, \dotsc, a_\ell)g(j_1, \dotsc, j_{m-r}, k_1, \dotsc, k_{r-\ell}, a_1, \dotsc, a_\ell)\notag,
\end{align*}
that is, by identifying $r$ of the $n$ variables of $f$ with $r$ of the $m$ variables of $g$ and then integrating out $\ell$ of the $r$ identified variables with respect to the counting measure $\kappa$. Note that $f \star_r^\ell g \in \ell^2(\N)^{\otimes n+m-r-\ell}$, since $\ellnorm{2}{n+m-r-\ell}{f \star_r^\ell g} \leq \ellnorm{2}{n}{f}\ellnorm{2}{m}{g}$ (cf.\ Lemma 2.4 in \cite{NouPecRei}). Even though $f \in \ell_0^2(\N)^{\circ n}$ and $g \in \ell_0^2(\N)^{\circ m}$, the contraction $f \star_r^\ell g$ must neither be symmetric nor be vanishing on diagonals. Therefore, we define the canonical symmetrization of a function $f$ on $\N^n$ by $\widetilde{f}(i_1, \dotsc, i_n) := \frac{1}{n!} \sum_{\sigma} f(i_{\sigma(1)}, \dotsc, i_{\sigma(n)})$, where the sum runs over all permutations $\sigma$ of the set $\{ 1, \dotsc, n\}$. Note that, if $f \in \ell^2(\N)^{\otimes n}$, then $\widetilde{f} \in \ell^2(\N)^{\otimes n}$, since $\ellnorm{2}{n}{\widetilde{f}} \leq \ellnorm{2}{n}{f}$.

\subsection{Discrete multiple stochastic integrals and chaos representation}
For $n \in \N$ and $f \in \ell_0^2(\N)^{\circ n}$, we define the discrete multiple stochastic integral of order $n$ of $f$ by
\begin{align}\label{Stochastic integral}
J_n(f) &:= \sum_{(i_1, \dotsc, i_n)\in\N^n} f(i_1, \dotsc, i_n)Y_{i_1} \cdot \dotsc \cdot Y_{i_n} \notag\\
&\phantom{:}= \sum_{(i_1, \dotsc, i_n)\in\Delta_n} f(i_1, \dotsc, i_n)Y_{i_1} \cdot \dotsc \cdot Y_{i_n} \notag\\
&\phantom{:}= n!\sum_{1 \leq i_1 < \dotsc < i_n < \infty} f(i_1, \dotsc, i_n)Y_{i_1} \cdot \dotsc \cdot Y_{i_n}.
\end{align}
In addition, we put $\ell^2(\N)^{\otimes 0} := \R$ and $J_0(c):=c$, for every $c \in \R$.\medskip

For every $n \in \N$, the subspace $\{ J_n(f) : f \in \ell_0^2(\N)^{\circ n} \}$ of $L^2(\Omega)$ is called the Rademacher chaos of order $n$. Now, every square-integrable Rademacher functional $F \in L^2(\Omega)$ admits a unique decomposition of the form
\begin{align}\label{Chaos representation}
F = \E[F] + \sum_{n=1}^\infty J_n(f_n)
\end{align}
with $f_n \in \ell_0^2(\N)^{\circ n}$, for every $n \in \N$ (cf.\ Proposition 6.7 in \cite{Pri2}). We call \eqref{Chaos representation} the chaos representation of $F$, where the series converges in $L^2(\Omega)$.\medskip

We will now prepare for the presentation of a product formula for discrete stochastic integrals. The following observation is crucial to derive such a product formula (cf.\ Chapter 5 in \cite{Pri2}).
\begin{lemma}\label{Structure equation lemma}
For every $k \in \N$, $Y_k^2$ admits the chaos representation
\begin{align}\label{Structure equation}
Y_k^2 = 1 + \varphi_k Y_k,
\end{align}
where the sequence $\varphi:=(\varphi_k)_{k \in \N}$ is defined by $\varphi_k:=(q_k-p_k)/\sqrt{p_kq_k}$, for every $k \in \N$.
\end{lemma}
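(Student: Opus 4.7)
The lemma asserts two things: a pointwise algebraic identity $Y_k^2 = 1 + \varphi_k Y_k$ and the observation that the right-hand side is already in chaos form. The chaos-representation part is essentially free, since $1 = J_0(1)$ sits in the zeroth chaos while $\varphi_k Y_k = J_1(\varphi_k \1_{\{k\}})$ sits in the first chaos by the very definition of $J_1$; so the entire task is to verify the pointwise identity.

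The most direct route is to substitute the explicit formula $Y_k = (X_k - p_k + q_k)/(2\sqrt{p_k q_k})$ into both sides and exploit $X_k^2 = 1$, which holds because $X_k \in \{-1,+1\}$. Expanding the square gives
\begin{align*}
4 p_k q_k \, Y_k^2 = 1 + (q_k - p_k)^2 + 2 (q_k - p_k) X_k,
\end{align*}
while expanding the affine expression yields
\begin{align*}
4 p_k q_k \, (1 + \varphi_k Y_k) = 4 p_k q_k + 2 (q_k - p_k)^2 + 2 (q_k - p_k) X_k.
\end{align*}
The $X_k$-terms already coincide, and what remains reduces to the elementary identity $(p_k + q_k)^2 = 1$, rewritten as $(q_k - p_k)^2 + 4 p_k q_k = 1$.

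Alternatively, and somewhat more conceptually, one can observe that $Y_k$ is a two-valued random variable, taking $\sqrt{q_k/p_k}$ with probability $p_k$ and $-\sqrt{p_k/q_k}$ with probability $q_k$; any function of $Y_k$ is therefore uniquely an affine function of $Y_k$. Writing $Y_k^2 = \alpha + \beta Y_k$ and matching at these two atoms gives a $2 \times 2$ linear system whose unique solution is $(\alpha, \beta) = (1, \varphi_k)$. Either route is routine and the lemma carries no real obstacle; its content lies less in the proof than in the role that the numbers $\varphi_k$ will subsequently play as structure constants in the forthcoming product formula for discrete multiple stochastic integrals.
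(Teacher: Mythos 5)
Your verification is correct: the algebra checks out (the key cancellation $(q_k-p_k)^2 + 4p_kq_k = (p_k+q_k)^2 = 1$ is exactly right), and the identification $1 = J_0(1)$, $\varphi_k Y_k = J_1(\varphi_k \1_{\{k\}})$ settles the chaos-representation claim. The paper itself gives no proof, deferring to Chapter 5 of \cite{Pri2}, and your direct substitution using $X_k^2=1$ is precisely the standard argument one finds there, so nothing further is needed.
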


For $n,m \in \N$, take two kernels $f \in \ell_0^2(\N)^{\circ n}$ and $g \in \ell_0^2(\N)^{\circ m}$. Now, for $r=1, \dotsc, n \wedge m$ and $\ell=0, \dotsc, r-1$, we define the weighted contraction of $f$ and $g$ by
\begin{align*}
&\varphi^{*r-\ell}(f \star_r^\ell g) (i_1, \dotsc, i_{n-r}, k_1, \dotsc, k_{r-\ell}, j_1, \dotsc, j_{m-r})\\
&:=\varphi_{k_1} \cdot \dotsc \cdot \varphi_{k_{r-\ell}} f \star_r^\ell g(i_1, \dotsc, i_{n-r}, k_1, \dotsc, k_{r-\ell}, j_1, \dotsc, j_{m-r}).
\end{align*}
Note that the indices $k_1, \dotsc, k_{r-\ell}$ of the factors in the product $\varphi_{k_1} \cdot \dotsc \cdot \varphi_{k_{r-\ell}}$ are the $r-\ell$ variables of the contraction $f \star_r^\ell g$ that are identified but not integrated out. For $r = 0, \dotsc, {n \wedge m}$, we further define
\begin{align*}
&\varphi^{*0}(f \star_r^r g) (i_1, \dotsc, i_{n-r}, j_1, \dotsc, j_{m-r}) := f \star_r^r g(i_1, \dotsc, i_{n-r}, j_1, \dotsc, j_{m-r}).
\end{align*}

Now, the following proposition states a formula for the product of discrete multiple stochastic integrals. Note that this is a generalization of Proposition 2.9 in \cite{NouPecRei} to the case of stochastic integrals based on a possibly non-symmetric and non-homogeneous infinite Rademacher sequence. We refer to the appendix for a proof of the statement. Also note that the following Proposition \ref{Multiplication formula proposition} corresponds to Proposition 5.1 in \cite{PriTor}.
\begin{proposition}[Product formula]\label{Multiplication formula proposition}
Let $n,m\in \N$ and $f\in\ell_0^2(\N)^{\circ n}, g\in\ell_0^2(\N)^{\circ m}$. Furthermore, let $\widetilde{(\varphi^{*r-\ell}(f\star_r^\ell g))}\1_{\Delta_{n+m-r-\ell}} \in \ell_0^2(\N)^{\circ n+m-r-\ell}$, for every $r=1, \dotsc, n \wedge m$ and $\ell=0, \dotsc, r-1$. Then,
\begin{align}
J_n(f)J_m(g) &= \sum_{r=0}^{n \wedge m} r! \binom{n}{r} \binom{m}{r} \sum_{\ell=0}^r \binom{r}{\ell} J_{n+m-r-\ell} \Big( \widetilde{(\varphi^{*r-\ell}(f\star_r^\ell g))}\1_{\Delta_{n+m-r-\ell}} \Big) \label{Multiplication formula equation 1}\\[5pt]
&= \sum_{r=0}^{n \wedge m} r! \binom{n}{r} \binom{m}{r} J_{n+m-2r}\left( \widetilde{ \left( f \star_r^r g \right)} \1_{\Delta_{n+m-2r}} \right) \notag\\
&\phantom{{}={}}+\sum_{r=1}^{n \wedge m} r! \binom{n}{r} \binom{m}{r} \sum_{\ell=0}^{r-1} \binom{r}{\ell} J_{n+m-r-\ell} \Big( \widetilde{(\varphi^{*r-\ell}(f\star_r^\ell g))}\1_{\Delta_{n+m-r-\ell}} \Big) \label{Multiplication formula equation 2},
\end{align}
where we put $\1_{\Delta_0}:=1$.
\end{proposition}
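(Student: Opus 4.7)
I would prove the formula by a direct combinatorial expansion starting from the definitions. Writing
\begin{align*}
J_n(f)J_m(g) = \sum_{(i_1,\dotsc,i_n)\in\Delta_n}\sum_{(j_1,\dotsc,j_m)\in\Delta_m} f(i_1,\dotsc,i_n)g(j_1,\dotsc,j_m)\, Y_{i_1}\cdots Y_{i_n} Y_{j_1}\cdots Y_{j_m},
\end{align*}
I partition the double sum according to the coincidence pattern between the $i$- and $j$-indices. Since $f$ vanishes whenever any two of its arguments coincide, and likewise for $g$, the set $\{(a,b):i_a=j_b\}$ is necessarily a partial bijection between $\{1,\dotsc,n\}$ and $\{1,\dotsc,m\}$. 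For fixed cardinality $r\in\{0,\dotsc,n\wedge m\}$ there are $r!\binom{n}{r}\binom{m}{r}$ such partial bijections, and by the symmetry of $f$ and $g$ each contributes the same amount; this lets me restrict to the canonical configuration where the last $r$ positions of each block are matched pairwise, producing the prefactor $r!\binom{n}{r}\binom{m}{r}$.

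In that canonical configuration, the $Y$-product factors as $Y_{i_{r+1}}\cdots Y_{i_n}Y_{j_{r+1}}\cdots Y_{j_m}\prod_{a=1}^r Y_{k_a}^2$, where $k_a:=i_a=j_a$. Lemma \ref{Structure equation lemma} rewrites $\prod_{a=1}^r Y_{k_a}^2 = \prod_{a=1}^r(1+\varphi_{k_a}Y_{k_a})$. Expanding this product, each position contributes either $1$ or $\varphi_{k_a}Y_{k_a}$. Grouping according to the number $\ell$ of positions contributing $1$ and invoking once more the symmetry of $f$ and $g$ in the shared $k$-arguments yields the binomial factor $\binom{r}{\ell}$, after which I may assume that the integrated positions are $\{1,\dotsc,\ell\}$.

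Summing out the $\ell$ integrated variables $k_1,\dotsc,k_\ell$ produces exactly the contraction. In the original sum they are required to be pairwise distinct and distinct from all surviving indices, but this is enforced automatically by the vanishing of $f$ and $g$ on their own diagonals, so the inner sum collapses to $f\star_r^\ell g$ at the surviving indices, weighted by $\varphi_{k_{\ell+1}}\cdots\varphi_{k_r}$, i.e.\ to $\varphi^{*r-\ell}(f\star_r^\ell g)$. What remains is a sum over $(n-r)+(r-\ell)+(m-r)=n+m-r-\ell$ pairwise distinct indices of this weighted contraction against the corresponding product of $Y$'s. Inserting the indicator $\1_{\Delta_{n+m-r-\ell}}$ and symmetrising the kernel (harmless, since the accompanying $Y$-product is symmetric) identifies this sum as $J_{n+m-r-\ell}\big(\widetilde{\varphi^{*r-\ell}(f\star_r^\ell g)}\1_{\Delta_{n+m-r-\ell}}\big)$, giving \eqref{Multiplication formula equation 1}. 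Formula \eqref{Multiplication formula equation 2} is the regrouping that isolates the terms $\ell=r$ (no $\varphi$-weight, lowest chaos order $n+m-2r$) from those with $\ell<r$.

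The main obstacle is that the sums defining $J_n(f)$ and $J_m(g)$ are infinite, so each rearrangement above must be justified in $L^2(\Omega)$; this is where the standing hypothesis $\widetilde{\varphi^{*r-\ell}(f\star_r^\ell g)}\1_{\Delta_{n+m-r-\ell}}\in\ell_0^2(\N)^{\circ n+m-r-\ell}$ is needed, ensuring that each resulting integral is well defined. To handle this rigorously I would first establish the identity for finitely supported kernels, where every step above is a manipulation of finite sums, and then extend to the general case by the "standard approximation argument" announced for the appendix, using the $L^2$-continuity of $J_n$ in its kernel.
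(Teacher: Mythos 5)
Your proposal is correct and follows essentially the same route as the paper's proof: expand the double sum, classify coincidence patterns as partial bijections contributing the factor $r!\binom{n}{r}\binom{m}{r}$, apply the structure equation $Y_k^2 = 1+\varphi_k Y_k$ to produce the weighted contractions with the $\binom{r}{\ell}$ factor, and pass from finitely supported kernels to the general case by truncation and $L^2$-convergence. The only (cosmetic) difference is that the paper treats the unweighted term $\ell=r$ separately before handling the $\varphi$-weighted terms, whereas you handle all $\ell$ uniformly.
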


\begin{remark}\leavevmode\rm
\begin{itemize}[align=left,labelwidth=\widthof{(iI)},leftmargin=\labelwidth+\labelsep]
\item[(i)] In Proposition \ref{Multiplication formula proposition}, sufficient conditions for $\widetilde{(\varphi^{*r-\ell}(f\star_r^\ell g))}\1_{\Delta_{n+m-r-\ell}}$ to be an element of $\ell^2(\N)^{\otimes n+m-r-\ell}$,  for every $r=1, \dotsc, n \wedge m$ and $\ell=0, \dotsc, r-1$, are given, e.g., if the sequence $\varphi$ is either constant or fulfills $\norm{\varphi}_{\ell^2(\N)} < \infty$.

\item[(ii)] While we will use \eqref{Multiplication formula equation 1} in further applications, the representation of the product formula in \eqref{Multiplication formula equation 2} exhibits the relation between the general case of a possibly non-symmetric and non-homogeneous Rademacher sequence and the case of a symmetric Rademacher sequence. In the case of a symmetric Rademacher sequence $X$, i.e.\ $p_k = q_k = 1/2$, for every $k \in \N$, the coefficients $\varphi_k$ of the chaos representation of $Y_k^2$ in \eqref{Structure equation} vanish, for every $k \in \N$, so that Proposition \ref{Multiplication formula proposition} reproduces Proposition 2.9 in \cite{NouPecRei}.
\end{itemize}
\end{remark}

The next corollary states an isometry formula for stochastic integrals as seen in Proposition 4.2 in \cite{Pri2}. Note that this is also an immediate conclusion from the product formula in Proposition \ref{Multiplication formula proposition}, since, for every $n \in \N$ and $f \in \ell_0^2(\N)^{\circ n}$, $\E[J_n(f)]=0$.
\begin{corollary}[Isometry formula]\label{Isometry formula corollary}
Let $n,m\in \N$ and $f\in\ell_0^2(\N)^{\circ n}, g\in\ell_0^2(\N)^{\circ m}$. Then,
\begin{align}\label{Isometry formula}
\E[J_n(f)J_m(g)]=
\begin{cases}
n! \langle f,g \rangle_{\ell^2(\N)^{\otimes n}}, &\text{if \,} n=m,\\
0, &\text{if \,} n \neq m.
\end{cases}
\end{align}
\end{corollary}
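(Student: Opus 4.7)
The plan is to take expectations on both sides of the product formula identity \eqref{Multiplication formula equation 1} from Proposition \ref{Multiplication formula proposition} and isolate the only summand that survives. By independence of the $Y_k$'s together with $\E[Y_k] = 0$, one has $\E[J_k(h)] = 0$ for every $k \geq 1$ and every admissible kernel $h$; this is the fact recorded in the paragraph immediately preceding the statement. Consequently, on the right-hand side of \eqref{Multiplication formula equation 1} only those pairs $(r,\ell)$ with $n+m-r-\ell = 0$ can contribute, since everything else is a multiple integral of positive order.

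Next I would identify these surviving indices. Combined with the constraints $0 \leq \ell \leq r \leq n \wedge m$, the condition $\ell = n+m-r$ yields $n+m \leq 2(n\wedge m)$, which forces $n = m$ and $r = \ell = n$. In particular, if $n \neq m$ then no term survives and $\E[J_n(f)J_m(g)] = 0$, establishing the second case of \eqref{Isometry formula}.

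For the case $n = m$, the unique surviving summand carries the combinatorial prefactor $n!\binom{n}{n}\binom{n}{n}\binom{n}{n} = n!$ and the integral is $J_0\bigl(\widetilde{(\varphi^{*0}(f\star_n^n g))}\1_{\Delta_0}\bigr)$. Using the conventions $J_0(c) = c$, $\1_{\Delta_0} = 1$, and $\varphi^{*0}(f\star_n^n g) = f\star_n^n g$ introduced in the paper, this reduces to the scalar
\begin{align*}
f\star_n^n g \;=\; \sum_{(a_1,\dotsc,a_n) \in \Delta_n} f(a_1,\dotsc,a_n)\,g(a_1,\dotsc,a_n) \;=\; \langle f, g\rangle_{\ell^2(\N)^{\otimes n}},
\end{align*}
where the second equality uses that $f,g$ vanish off $\Delta_n$. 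Altogether this yields $\E[J_n(f)J_n(g)] = n!\,\langle f, g\rangle_{\ell^2(\N)^{\otimes n}}$, which is the first case of \eqref{Isometry formula}. There is no substantive obstacle here; the argument is pure bookkeeping inside \eqref{Multiplication formula equation 1}, the only points requiring attention being the edge convention for $\varphi^{*r-\ell}$ at $r=\ell$ and the observation that symmetrizing a scalar kernel is the identity.
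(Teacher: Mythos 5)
Your argument is correct and follows precisely the route the paper indicates: the corollary is presented there as an immediate consequence of the product formula \eqref{Multiplication formula equation 1} together with $\E[J_k(h)]=0$ for $k \geq 1$, and your index bookkeeping (the constraint $r+\ell=n+m$ with $\ell \leq r \leq n \wedge m$ forcing $n=m$ and $r=\ell=n$, and the identification of $f \star_n^n g$ with $\langle f,g\rangle_{\ell^2(\N)^{\otimes n}}$) fills in exactly that computation. The only caveat is that Proposition \ref{Multiplication formula proposition} carries an integrability hypothesis on the weighted contractions which the corollary does not assume, so a fully self-contained derivation for general kernels would either run the truncation argument of the appendix or fall back on the direct proof cited from \cite{Pri2}, as the paper itself does.
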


\subsection{Discrete Malliavin calculus}
We start by defining the discrete gradient operator $D$. For every $\omega = (\omega_1, \omega_2, \dotsc) \in \Omega$ and $k \in \N$, let $\omega_+^k:=(\omega_1, \dotsc, \omega_{k-1}, +1, \omega_{k+1}, \dotsc)$ and $\omega_-^k:=(\omega_1, \dotsc, \omega_{k-1}, -1, \omega_{k+1}, \dotsc)$.
Furthermore, for every $F \in L^1(\Omega)$, $\omega \in \Omega$ and $k \in \N$, let $F_k^+(\omega) := F(\omega_+^k)$ and $F_k^-(\omega) := F(\omega_-^k)$. For $F \in L^1(\Omega)$, the discrete gradient operator is defined by $DF:=(D_kF)_{k \in \N}$ with
\begin{align}\label{Pathwise gradient}
D_kF:=\sqrt{p_kq_k}(F_k^+-F_k^-),
\end{align}
for every $k \in \N$. Note that it immediately follows from \eqref{Pathwise gradient} that, for every $k \in \N$, $D_kF$ is independent of $X_k$. Now, let $F \in L^2(\Omega)$ have the chaos representation $F = \E[F] + \sum_{n=1}^\infty J_n(f_n)$ with kernels $f_n \in \ell_0^2(\N)^{\circ n}$, for every $n \in \N$. Then, for every $k \in \N$, $D_kF \in L^2(\Omega)$ and has the chaos representation 
\begin{align*}
D_kF=\sum_{n=1}^\infty nJ_{n-1}(f_n(\, \cdot \, ,k)),
\end{align*}
where, for every $n \in \N$, $f_n(\, \cdot \, ,k) \in \ell_0^2(\N)^{\circ n-1}$ denotes the kernel $f_n$ with one of its components fixed, thus as a function in only $n-1$ variables (cf.\ Chapter 2.3 in \cite{KroReiThae2}). In addition, for $F \in L^1(\Omega)$ and $m \in \N$, the iterated discrete gradient operator of order $m$ is defined by $D^mF := (D_{k_1, \dotsc, k_m}^mF)_{k_1, \dotsc, k_m \in \N}$ with $D_{k_1, \dotsc, k_m}^mF := D_{k_m}(D_{k_1, \dotsc, k_{m-1}}^{m-1}F)$, for every $k_1, \dotsc, k_m \in \N$, where we put $D_{k_1, \dotsc, k_0}^0F:=F$. Given $F \in L^2(\Omega)$ with chaos representation $F=\E[F]+\sum_{n=1}^\infty J_n(f_n)$ as above and $m \in \N$, we say $F \in \dom(D^m)$, if
\begin{align}\label{F in dom(D)}
\E [ \ellnorm{2}{m}{D^mF}^2 ] = \sum_{n=m}^\infty \frac{n!}{(n-m)!} n!  \ellnorm{2}{n}{f_n}^2 < \infty.
\end{align}

We will now define the discrete divergence operator $\delta$ and its domain $\dom(\delta)$. For $n \in \N$ and $f_{n} \in \ell_0^2(\N)^{\circ n-1} \otimes \ell^2(\N)$ we consider the sequence $u:=(u_k)_{k \in \N}$ with $u_k:= \sum_{n=1}^\infty J_{n-1}(f_n(\, \cdot \, ,k))$, for every $k \in \N$. For such a sequence $u$, we say $u \in \dom(\delta)$, if
\begin{align}\label{u in dom(delta)}
\sum_{n=1}^\infty n! \ellnorm{2}{n}{\widetilde{f}_n \1_{\Delta_n}}^2 < \infty.
\end{align}
For $u \in \dom(\delta)$, the discrete divergence operator $\delta$ is then defined by
\begin{align*}
\delta(u):= \sum_{n=1}^\infty J_{n}(\widetilde{f}_n \1_{\Delta_n}).
\end{align*}
Note that \eqref{u in dom(delta)} is equivalent to $\E[(\delta(u))^2] < \infty$. Now, $\delta$ is the adjoint of $D$ (cf.\ Proposition 9.2 in \cite{Pri2}). 

\begin{lemma}
Let $F \in \dom(D)$ and $u \in \dom(\delta)$. Then,
\begin{align}\label{Adjointness}
\E[F \delta(u)] = \E[\langle DF, u \rangle_{\ell^2(\N)}].
\end{align}
\end{lemma}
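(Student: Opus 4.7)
The plan is to verify the identity chaos-by-chaos, using the isometry formula in Corollary \ref{Isometry formula corollary} as the sole engine. Write out the chaos expansion $F = \E[F] + \sum_{n=1}^\infty J_n(f_n)$ with $f_n \in \ell_0^2(\N)^{\circ n}$, and write $u_k = \sum_{m=1}^\infty J_{m-1}(g_m(\, \cdot \, ,k))$ with $g_m \in \ell_0^2(\N)^{\circ m-1} \otimes \ell^2(\N)$, so that by definition $\delta(u) = \sum_{m=1}^\infty J_m(\widetilde{g}_m \1_{\Delta_m})$.

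For the left-hand side, since $\E[J_n(\cdot)] = 0$ for $n \geq 1$ and by Corollary \ref{Isometry formula corollary},
\begin{align*}
\E[F\delta(u)] = \sum_{n=1}^\infty n!\,\langle f_n, \widetilde{g}_n \1_{\Delta_n}\rangle_{\ell^2(\N)^{\otimes n}} = \sum_{n=1}^\infty n!\,\langle f_n, g_n\rangle_{\ell^2(\N)^{\otimes n}},
\end{align*}
where the last equality uses that $f_n \in \ell_0^2(\N)^{\circ n}$ is symmetric and vanishes on diagonals, so that pairing against $\widetilde{g}_n \1_{\Delta_n}$ returns the full inner product with $g_n$ (symmetrization is harmless against a symmetric function, and $\1_{\Delta_n}$ is absorbed since $f_n$ already vanishes off $\Delta_n$). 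For the right-hand side, recall from the excerpt that $D_k F = \sum_{n=1}^\infty nJ_{n-1}(f_n(\, \cdot \, ,k))$, and the kernel $f_n(\, \cdot \, ,k)$ is symmetric in its remaining $n-1$ variables. Applying the isometry formula inside each summand,
\begin{align*}
\E[\langle DF,u\rangle_{\ell^2(\N)}] = \sum_{k=1}^\infty \sum_{n=1}^\infty n(n-1)!\,\langle f_n(\, \cdot \, ,k), g_n(\, \cdot \, ,k)\rangle_{\ell^2(\N)^{\otimes n-1}} = \sum_{n=1}^\infty n!\,\langle f_n, g_n\rangle_{\ell^2(\N)^{\otimes n}},
\end{align*}
matching the previous expression.

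The main technical issue is justifying the interchanges: one needs to move $\E$ inside both series in $n$ (and in $k$) and to use orthogonality of distinct chaoses across a double sum. This is where the hypotheses $F \in \dom(D)$ and $u \in \dom(\delta)$ are used. The condition \eqref{F in dom(D)} gives $\sum_n n\cdot n!\,\ellnorm{2}{n}{f_n}^2 < \infty$, and \eqref{u in dom(delta)} gives $\sum_n n!\,\ellnorm{2}{n}{\widetilde{g}_n \1_{\Delta_n}}^2 < \infty$. By Cauchy–Schwarz in $\ell^2(\N)^{\otimes n}$ and then in the counting measure over $n$, the double sums above are absolutely summable, which allows a standard dominated convergence / Fubini argument to validate reordering. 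Orthogonality of chaoses of different orders (again via Corollary \ref{Isometry formula corollary}) kills all cross-terms $n \neq m$, so only the diagonal contributions survive, yielding equality of the two sides.

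The only genuinely delicate point is the interchange of the sum over $k$ with the $L^2$ inner product defining $\langle DF, u\rangle_{\ell^2(\N)}$, which I would handle by truncating $F$ and $u$ at finite chaos level (so all series become finite), verifying the identity in that case by the computation above, and then passing to the limit using the $\dom(D)$ and $\dom(\delta)$ norms together with Cauchy–Schwarz on $\E[|F\delta(u)|]$ and $\E[\ellnorm{2}{1}{DF}\ellnorm{2}{1}{u}]$.
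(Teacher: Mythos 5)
The paper does not actually prove this lemma: it states it and refers to Proposition 9.2 in \cite{Pri2}, so there is no in-text argument to compare against. Your chaos-by-chaos verification is the standard self-contained proof of this duality, and the core computation is correct: both sides reduce to $\sum_{n=1}^\infty n!\,\langle f_n, g_n\rangle_{\ell^2(\N)^{\otimes n}}$, and the identification $\langle f_n, \widetilde{g}_n\1_{\Delta_n}\rangle_{\ell^2(\N)^{\otimes n}} = \langle f_n, g_n\rangle_{\ell^2(\N)^{\otimes n}}$ is justified exactly as you say, because symmetrization is a self-adjoint projection and $f_n$ already vanishes off $\Delta_n$. What your argument buys over the paper's citation is transparency about where each hypothesis enters. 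One caveat on the convergence discussion: your Cauchy--Schwarz bound via \eqref{F in dom(D)} and \eqref{u in dom(delta)} controls $\sum_n n!\,\ellnorm{2}{n}{f_n}\,\ellnorm{2}{n}{\widetilde{g}_n\1_{\Delta_n}}$, which legitimizes the term-by-term evaluation of $\E[F\delta(u)]$; but the rearrangement of the double sum over $(n,k)$ on the right-hand side requires $\sum_n n!\,\ellnorm{2}{n}{f_n}\,\ellnorm{2}{n}{g_n}<\infty$ with the \emph{unsymmetrized} norms $\ellnorm{2}{n}{g_n}$, and \eqref{u in dom(delta)} only bounds $\ellnorm{2}{n}{\widetilde{g}_n\1_{\Delta_n}}$, which can be strictly (even infinitely) smaller. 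This is really a gap in the paper's formulation rather than in your proof --- for $\langle DF, u\rangle_{\ell^2(\N)}$ to be a well-defined integrable random variable one already needs $u \in L^2(\Omega;\ell^2(\N))$, i.e.\ $\sum_n (n-1)!\,\ellnorm{2}{n}{g_n}^2 < \infty$, which is implicit in the statement --- but you should make that extra assumption explicit when invoking Fubini over $k$ and $n$; with it, your truncation-and-limit argument closes the proof.
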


Next, we define the discrete Ornstein-Uhlenbeck operator $L$ and its (pseudo-)inverse $L^{-1}$. Given $F \in L^2(\Omega)$, again with chaos representation $F=\E[F]+\sum_{n=1}^\infty J_n(f_n)$ as above, we say $F \in \dom(L)$, if
\begin{align*}
\sum_{n=1}^\infty n^2 n!  \ellnorm{2}{n}{f_n}^2 < \infty.
\end{align*}
For $F \in \dom(L)$, the discrete Ornstein-Uhlenbeck operator $L$ is then defined by
\begin{align*}
LF := -\sum_{n=1}^\infty n J_n(f_n).
\end{align*}
For centered $F \in L^2(\Omega)$, its (pseudo-)inverse is defined by 
\begin{align*}
L^{-1}F := -\sum_{n=1}^\infty \frac{1}{n} J_n(f_n).
\end{align*}

The following lemma states the relation between the operators $D$, $\delta$ and $L$ (cf.\ Chapter 10 in \cite{Pri2}).
\begin{lemma}
It holds that
\begin{align}\label{L = -delta D}
L = -\delta D.
\end{align}
\end{lemma}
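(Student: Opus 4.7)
The statement to prove is that, for $F$ in the appropriate domain, applying $-\delta$ to $DF$ recovers $LF$. The plan is to verify this directly on the chaos decomposition, since all three operators admit clean expressions there.

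Start with $F \in \dom(L) \subset L^2(\Omega)$ and write its chaos decomposition $F = \E[F] + \sum_{n=1}^\infty J_n(f_n)$ with $f_n \in \ell_0^2(\N)^{\circ n}$. First I would check that everything in sight is well defined: the hypothesis $F \in \dom(L)$ gives $\sum_{n=1}^\infty n^2 n! \ellnorm{2}{n}{f_n}^2 < \infty$, which trivially implies $\sum_{n=1}^\infty n \cdot n! \ellnorm{2}{n}{f_n}^2 < \infty$, so by \eqref{F in dom(D)} we have $F \in \dom(D)$ and the sequence $DF = (D_kF)_{k \in \N}$ is well defined in $L^2(\Omega; \ell^2(\N))$.

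Next I would use the known chaos expansion of the gradient recalled earlier in the paper, namely
\begin{align*}
D_k F = \sum_{n=1}^\infty n J_{n-1}(f_n(\cdot, k)) = \sum_{n=1}^\infty J_{n-1}(g_n(\cdot, k)), \qquad g_n := n f_n,
\end{align*}
so that $DF$ is written in exactly the form required by the definition of $\delta$. To legitimately invoke the divergence operator, I would verify the membership condition \eqref{u in dom(delta)}: since each $g_n = n f_n$ is already symmetric and supported in $\Delta_n$ (because $f_n \in \ell_0^2(\N)^{\circ n}$), we have $\widetilde{g_n}\1_{\Delta_n} = n f_n$, hence
\begin{align*}
\sum_{n=1}^\infty n! \ellnorm{2}{n}{\widetilde{g_n}\1_{\Delta_n}}^2 = \sum_{n=1}^\infty n^2 \cdot n! \ellnorm{2}{n}{f_n}^2 < \infty,
\end{align*}
by the assumption $F \in \dom(L)$. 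Thus $DF \in \dom(\delta)$.

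Applying the definition of $\delta$ then gives
\begin{align*}
\delta(DF) = \sum_{n=1}^\infty J_n(\widetilde{g_n}\1_{\Delta_n}) = \sum_{n=1}^\infty n J_n(f_n) = -LF,
\end{align*}
which is the desired identity. The argument is essentially bookkeeping, so no step is genuinely hard; the only point that deserves care is the matching of domains and the observation that the symmetrization and indicator in the definition of $\delta$ act trivially on $n f_n$ because $f_n$ already lies in $\ell_0^2(\N)^{\circ n}$. If one wishes to emphasize robustness, one can alternatively verify $L = -\delta D$ by testing against arbitrary $G \in \dom(D)$ using the adjointness relation \eqref{Adjointness} together with the isometry formula of Corollary \ref{Isometry formula corollary}, but the direct chaos computation above is the shortest route.
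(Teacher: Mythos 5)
Your argument is correct. The paper itself gives no proof of this lemma — it is stated with a citation to Chapter 10 of \cite{Pri2} — and your direct verification on the chaos decomposition (identifying $g_n = n f_n$, noting that $\widetilde{g_n}\1_{\Delta_n} = n f_n$ since $f_n \in \ell_0^2(\N)^{\circ n}$, and matching the domain conditions $\sum_n n^2\, n!\, \ellnorm{2}{n}{f_n}^2 < \infty$ on both sides) is exactly the standard argument the reference supplies; as a bonus, your domain check actually shows $\dom(L) = \{F \in \dom(D) : DF \in \dom(\delta)\}$, so the operator identity holds with domains matching.
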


Finally, we present an integration by parts formula, which is one of the main contributions to the discrete Malliavin-Stein method.
\begin{lemma}[Integration by parts formula]
Let $F,G \in \dom(D)$. Then,
\begin{align}\label{Integration by parts formula}
\E[(F-\E[F])G] = \E[\langle -DL^{-1}(F - \E[F]), DG \rangle_{\ell^2(\N)}].
\end{align}
\end{lemma}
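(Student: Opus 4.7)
The plan is to reduce the claim to the adjointness identity \eqref{Adjointness} by exploiting the decomposition $L = -\delta D$ from \eqref{L = -delta D} applied to $L^{-1}(F - \E[F])$.

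First I would set $\tilde F := F - \E[F]$ and expand it in chaos as $\tilde F = \sum_{n=1}^\infty J_n(f_n)$ with $f_n \in \ell_0^2(\N)^{\circ n}$. From the definitions of $L^{-1}$ and $D$, a termwise computation gives
\[
u_k \,:=\, D_k L^{-1}\tilde F \,=\, -\sum_{n=1}^\infty J_{n-1}(f_n(\, \cdot \,, k)), \qquad k \in \N.
\]
Next I would verify that $u := (u_k)_{k\in\N} \in \dom(\delta)$. Since each $f_n$ is symmetric and supported off the diagonal, the symmetrisation and diagonal restriction appearing in \eqref{u in dom(delta)} have no effect, so the required summability collapses to $\sum_{n=1}^\infty n!\,\ellnorm{2}{n}{f_n}^2 < \infty$, which is implied (indeed strictly weaker than the condition) by $F \in \dom(D)$ via \eqref{F in dom(D)}; by Corollary \ref{Isometry formula corollary} this sum is nothing but $\E[\tilde F^2]$. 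A direct evaluation of $\delta(u)$ from its definition then yields $\delta(u) = -\sum_{n=1}^\infty J_n(f_n) = -\tilde F$, which is precisely the identity $LL^{-1}\tilde F = \tilde F$ rephrased through \eqref{L = -delta D}.

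Finally, since $G \in \dom(D)$ by hypothesis and $u \in \dom(\delta)$, the adjointness formula \eqref{Adjointness} applied to this particular $u$ yields
\[
\E[\tilde F\, G] \,=\, -\E[G\, \delta(u)] \,=\, -\E[\langle DG, u \rangle_{\ell^2(\N)}] \,=\, \E[\langle -DL^{-1}\tilde F,\, DG \rangle_{\ell^2(\N)}],
\]
which is exactly \eqref{Integration by parts formula}. The only step that is not a pure bookkeeping exercise is checking $DL^{-1}\tilde F \in \dom(\delta)$, but this follows immediately from the explicit chaos expansion of $DL^{-1}\tilde F$ together with $F \in \dom(D)$, so no substantial obstacle arises.
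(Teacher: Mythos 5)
Your argument is correct and follows essentially the same route as the paper: the paper's proof is the three-line chain $\E[(F-\E[F])G]=\E[LL^{-1}(F-\E[F])G]=\E[-\delta DL^{-1}(F-\E[F])G]=\E[\langle -DL^{-1}(F-\E[F]),DG\rangle_{\ell^2(\N)}]$, invoking \eqref{L = -delta D} and \eqref{Adjointness} exactly as you do. You merely unpack the same steps at the level of chaos expansions and additionally verify the domain condition $DL^{-1}(F-\E[F])\in\dom(\delta)$, which the paper leaves implicit.
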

\begin{proof} Relation \eqref{L = -delta D} and the adjointness of $D$ and $\delta$ in \eqref{Adjointness} yield 
\begin{align*}
&\E[(F-\E[F])G] = \E[LL^{-1}(F - \E[F])G]\\
&= \E[-\delta D L^{-1}(F - \E[F])G]\\
&= \E[\langle -DL^{-1}(F - \E[F]), DG \rangle_{\ell^2(\N)}].
\end{align*}
\end{proof}

\subsection{The Chen-Stein method}
Stein's method for Poisson approximation, also known as the Chen-Stein method, has been introduced by Chen in \cite{Che}. Since then, the method was further developed by Barbour and others, see, e.g., \cite{BarHolJan}. The starting point of the method is the following characterization of a Poisson distribution. A random variable $Z$ has a Poisson distribution with mean $\lambda > 0$, if and only if, for every bounded function $f:\N_0 := \N \cup \{ 0 \} \rightarrow \R$,
\begin{align*}
\E[\lambda f(Z+1) - Zf(Z)] = 0.
\end{align*}
Now, the main idea is to set the total variation distance between the law of a given random variable and a Poisson distribution in relation to the characterization above. The link to do so is given by the Chen-Stein equation. To state the equation, let $\Po(\lambda)$ be a Poisson random variable with mean $\lambda > 0$. Then, for every $A \subseteq \N_0$ and $k \in \N_0$, the Chen-Stein equation is given by
\begin{align}\label{Chen-Stein equation}
\lambda f(k+1) - kf(k) = \1_{\{k \in A\}} - P(\Po(\lambda) \in A).
\end{align}
For $k \in \N$, \eqref{Chen-Stein equation} has a unique and bounded solution $f_{\lambda, A}:\N \rightarrow \R$ with
\begin{align}\label{Chen-Stein solution}
f_{\lambda, A}(k) := \frac{(k-1)!}{\lambda^k} \sum_{j=0}^{k-1} (\1_{\{ j \in A \}} - P(\Po(\lambda) \in A))\frac{\lambda^j}{j!}.
\end{align}
Since, for $k=0$, the value of $f(0)$ does not contribute to \eqref{Chen-Stein equation}, we conventionally put $f_{\lambda, A}(0)=0$. Given a function $f:\N_0 \rightarrow \R$, we define the forward difference of $f$ by $\Delta f(k) := f(k+1)-f(k)$, for every $k \in \N_0$. Furthermore, we define the iterated forward difference of $f$ by $\Delta^2 f(k) := \Delta(\Delta f(k))$, for every $k \in \N_0$. Moreover, the supremum norm of $f$ is given by $\norm{f}_{\infty} := \sup_{k \in \N_0} \absolute{f(k)}$. The following bounds hold for the solution of the Chen-Stein equation in \eqref{Chen-Stein solution} (cf.\ Lemma 1.1.1 and Remark 1.1.2 in \cite{BarHolJan}):
\begin{align}\label{Stein factors 1}
\norm{f_{\lambda, A}}_\infty \leq 1 \wedge \sqrt{\frac{2}{e \lambda}}, \quad \norm{\Delta f_{\lambda, A}}_\infty \leq \frac{1-e^{-\lambda}}{\lambda}.
\end{align}
In addition, the relation $\norm{\Delta^2 f_{\lambda, A}}_\infty \leq 2\norm{\Delta f_{\lambda, A}}_\infty$ gives the obvious bound
\begin{align}\label{Stein factors 2}
\norm{\Delta^2 f_{\lambda, A}}_\infty \leq \frac{2(1-e^{-\lambda})}{\lambda}.
\end{align}
Note that the bound $\norm{\Delta^2 f_{\lambda, A}}_\infty \leq 2(1-e^{-\lambda})/\lambda^2$ does not follow from Theorem 1.3 in \cite{Dal} as stated in \cite{PecPoi} and \cite{PriTor}. However, Theorem 1.3 in \cite{Dal} does lead to a bound $\norm{\Delta^2 f_{\lambda, A}}_\infty \leq 2/\lambda$.

\section{Main results}

In the following, we will deduce a bound on the error in the Poisson approximation of general integer valued functionals of possibly non-symmetric and non-homogeneous infinite Rademacher sequences with respect to the total variation distance. The total variation distance between the distributions of two random variables $X$ and $Y$ with values in $\N_0$ is defined by
\begin{align*}
d_{TV}(X,Y) := \sup_{A \subseteq \N_0} \absolute{P(X \in A) - P(Y \in A)}.
\end{align*}
For a corresponding bound on the error in the Poisson approximation of integer valued functionals of general Poisson measures see Theorem 3.1 in \cite{PecPoi}. Again, note that the following Theorem \ref{Main theorem} and Corollary \ref{Main corollary} are related to Theorem 6.3 in \cite{PriTor}

\begin{theorem}\label{Main theorem}
Let $F \in \dom(D)$ with values in $\N_0$ and let $\Po(\lambda)$ be a Poisson random variable with mean $\lambda > 0$. Then,
\begin{align}\label{Main theorem equation}
&d_{TV}(F, \Po(\lambda)) \notag\\[5pt]
&\leq \Big( 1 \wedge \sqrt{\frac{2}{e\lambda}} \Big) \absolute{\lambda - \E[F]} + \frac{1-e^{-\lambda}}{\lambda} \E[\absolute{\lambda - \langle DF, -DL^{-1}(F-\E[F]) \rangle_{\ell^2(\N)}}]\notag\\
&\phantom{{}={}} +\frac{1-e^{-\lambda}}{\lambda} \E \Big[ \Big\langle \frac{1}{\sqrt{pq}} DF(DF + \sqrt{pq}X), \absolute{-DL^{-1}(F-\E[F])} \Big\rangle_{\ell^2(\N)} \Big].
\end{align}
\end{theorem}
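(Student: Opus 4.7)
The plan is to combine the Chen-Stein characterization of total variation distance with the Malliavin integration by parts formula \eqref{Integration by parts formula}, completed by a discrete chain rule estimate for $Df(F)$ whose remainder matches exactly the third term of \eqref{Main theorem equation}. Writing $f:=f_{\lambda,A}$ for the Chen-Stein solution, I would begin from
\begin{align*}
d_{TV}(F,\Po(\lambda))=\sup_{A\subseteq\N_0}\absolute{\E[\lambda f(F+1)-Ff(F)]}
\end{align*}
and use the identity $\lambda f(F+1)-Ff(F)=(\lambda-F)f(F)+\lambda\Delta f(F)$ together with the splitting $\lambda-F=(\lambda-\E[F])-(F-\E[F])$ to obtain
\begin{align*}
\E[\lambda f(F+1)-Ff(F)]=(\lambda-\E[F])\E[f(F)]+\lambda\E[\Delta f(F)]-\E[(F-\E[F])f(F)].
\end{align*}

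Next, I would apply \eqref{Integration by parts formula} to rewrite the last summand as $\E[\langle -DL^{-1}(F-\E[F]),Df(F)\rangle_{\ell^2(\N)}]$, and then add and subtract the term $\langle DF,-DL^{-1}(F-\E[F])\rangle_{\ell^2(\N)}\Delta f(F)$ to recast the Malliavin-modified Stein expression as
\begin{align*}
\big(\lambda-\langle DF,-DL^{-1}(F-\E[F])\rangle_{\ell^2(\N)}\big)\Delta f(F)-\langle -DL^{-1}(F-\E[F]),Df(F)-DF\cdot\Delta f(F)\rangle_{\ell^2(\N)}.
\end{align*}
At this stage the first two terms of \eqref{Main theorem equation} are already visible; what remains is to estimate the chain rule remainder $D_kf(F)-D_kF\cdot\Delta f(F)$ by $\|\Delta f\|_\infty\cdot\frac{1}{\sqrt{p_kq_k}}D_kF(D_kF+\sqrt{p_kq_k}X_k)$.

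The main obstacle is precisely this chain rule estimate. Let $m_k:=F_k^+-F_k^-=D_kF/\sqrt{p_kq_k}$, which is an integer since $F$ takes values in $\N_0$. The plan is a double telescoping: first expand $f(F_k^+)-f(F_k^-)$ as a sum of $|m_k|$ values of $\Delta f$, then express the difference between each such value and $\Delta f(F)$ as a sum of values of $\Delta^2 f$. A short case analysis over the sign of $m_k$ and the value of $X_k\in\{-1,+1\}$, using that $F=F_k^+$ on $\{X_k=+1\}$ and $F=F_k^-$ on $\{X_k=-1\}$ together with the elementary bound $\|\Delta^2 f\|_\infty\leq 2\|\Delta f\|_\infty$, leads in every case to the unified estimate
\begin{align*}
\absolute{f(F_k^+)-f(F_k^-)-m_k\Delta f(F)}\leq\|\Delta f\|_\infty\cdot m_k(m_k+X_k),
\end{align*}
where $m_k(m_k+X_k)\geq 0$ since it is a product of two consecutive integers. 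Multiplying by $\sqrt{p_kq_k}$ produces the claimed pointwise bound on $|D_kf(F)-D_kF\cdot\Delta f(F)|$.

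Finally, I would take absolute values, apply the triangle inequality to bound the inner product in $\ell^2(\N)$ pointwise in $k$, and invoke the Stein factor bounds \eqref{Stein factors 1} to estimate $\|f\|_\infty$ and $\|\Delta f\|_\infty$. Taking the supremum over $A\subseteq\N_0$ then yields \eqref{Main theorem equation}.
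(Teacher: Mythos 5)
Your proposal is correct and follows essentially the same route as the paper: the same Chen--Stein plus integration-by-parts decomposition, the same chain-rule remainder $D_kf(F)-D_kF\cdot\Delta f(F)$, and the same second-difference Taylor estimate yielding the nonnegative bound $\frac{1}{\sqrt{p_kq_k}}D_kF(D_kF+\sqrt{p_kq_k}X_k)$ via the case split on $X_k=\pm1$. The only cosmetic difference is that you absorb the relation $\norm{\Delta^2 f_{\lambda,A}}_\infty\leq 2\norm{\Delta f_{\lambda,A}}_\infty$ directly into the pointwise remainder estimate, whereas the paper keeps $\norm{\Delta^2 f_{\lambda,A}}_\infty$ explicit and invokes \eqref{Stein factors 2} at the end; the resulting bounds coincide.
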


\begin{proof}
By the Chen-Stein equation in \eqref{Chen-Stein equation} and the integration by parts formula in \eqref{Integration by parts formula}, we have, for every $A \subseteq \N_0$,
\begin{align}\label{Main theorem proof equation 1}
&P(F \in A) - P(\Po(\lambda) \in A) = \E[\lambda f_{\lambda, A}(F+1)] - \E[F f_{\lambda, A}(F)]\notag\\
&= \E[\lambda (f_{\lambda, A}(F+1) - f_{\lambda, A}(F))] - \E[(F - \E[F]) f_{\lambda, A}(F)] - \E[(\E[F] - \lambda) f_{\lambda, A}(F)]\notag\\
&= \E[\lambda \Delta f_{\lambda, A}(F)] - \E[\langle Df_{\lambda, A}(F), -DL^{-1}(F-\E[F]) \rangle_{\ell^2(\N)}] - \E[(\E[F] - \lambda) f_{\lambda, A}(F)].
\end{align}

We will now further deduce $Df_{\lambda, A}(F)$. For every $k \in \N$, we have
\begin{align}\label{Main theorem proof D_kf_h(F)}
D_kf_{\lambda, A}(F) &= \sqrt{p_kq_k}(f_{\lambda, A}(F_k^+) - f_{\lambda, A}(F_k^-))\notag\\
&= \Delta f_{\lambda, A}(F) \cdot D_kF + \sqrt{p_kq_k}(f_{\lambda, A}(F_k^+) - f_{\lambda, A}(F_k^-) - \Delta f_{\lambda, A}(F)(F_k^+ - F_k^-))\notag\\
&= \Delta f_{\lambda, A}(F) \cdot D_kF + R_k(F)
\end{align}
with
\begin{align*}
R_k(F) := \sqrt{p_kq_k}(f_{\lambda, A}(F_k^+) - f_{\lambda, A}(F_k^-) - \Delta f_{\lambda, A}(F)(F_k^+ - F_k^-)).
\end{align*}
Now, let $a,k \in \N_0$ with $k \geq a+2$. Then,
\begin{align*}
f_{\lambda, A}(k)-f_{\lambda, A}(a)-\Delta f_{\lambda, A}(a)(k-a) = \sum_{j=1}^{k-a-1} j \cdot \Delta^2 f_{\lambda, A}(k-j-1).
\end{align*}
Similarly, for every $a,k \in \N_0$ with $k \leq a-1$, one gets
\begin{align*}
f_{\lambda, A}(k)-f_{\lambda, A}(a)-\Delta f_{\lambda, A}(a)(k-a) = \sum_{j=1}^{a-k} j \cdot \Delta^2 f_{\lambda, A}(k+j-1).
\end{align*}
Moreover, for every $a \in \N_0$ and $k \in \lbrace a, a+1 \rbrace$, it holds that
\begin{align*}
f_{\lambda, A}(k)-f_{\lambda, A}(a)-\Delta f_{\lambda, A}(a)(k-a) = 0.
\end{align*} 
Thus, for every $a,k \in \N_0$,
\begin{align}\label{Main theorem proof forward difference}
\absolute{f_{\lambda, A}(k)-f_{\lambda, A}(a)-\Delta f_{\lambda, A}(a)(k-a)} \leq \frac{\norm{\Delta^2 f_{\lambda, A}}_\infty}{2} (k-a)(k-a-1).
\end{align}
We will now use \eqref{Main theorem proof forward difference} to further estimate the error term $R_k(F)$ in the chain rule at \eqref{Main theorem proof D_kf_h(F)}. Note that, for every $k \in \N$, we have
\begin{align*}
R_k(F) &= \sqrt{p_kq_k}(f_{\lambda, A}(F_k^+) - f_{\lambda, A}(F_k^-) - \Delta f_{\lambda, A}(F_k^-)(F_k^+ - F_k^-))\1_{\lbrace X_k=-1 \rbrace}\\
&\phantom{{}={}} - \sqrt{p_kq_k}(f_{\lambda, A}(F_k^-) - f_{\lambda, A}(F_k^+) - \Delta f_{\lambda, A}(F_k^+)(F_k^- - F_k^+))\1_{\lbrace X_k=+1 \rbrace}.
\end{align*}
It then follows by \eqref{Main theorem proof forward difference} that, for every $k \in \N$,
\begin{align*}
\absolute{f_{\lambda, A}(F_k^+) - f_{\lambda, A}(F_k^-) - \Delta f_{\lambda, A}(F_k^-)(F_k^+ - F_k^-)} \leq \frac{\norm{\Delta^2 f_{\lambda, A}}_\infty}{2} (F_k^+ - F_k^-)(F_k^+ - F_k^- - 1)\phantom{.}
\end{align*}
and
\begin{align*}
\absolute{f_{\lambda, A}(F_k^-) - f_{\lambda, A}(F_k^+) - \Delta f_{\lambda, A}(F_k^+)(F_k^- - F_k^+)} \leq \frac{\norm{\Delta^2 f_{\lambda, A}}_\infty}{2} (F_k^+ - F_k^-)(F_k^+ - F_k^- + 1).
\end{align*}
Thus, for every $k \in \N$,
\begin{align}\label{Main theorem proof error term}
\absolute{R_k(F)} &\leq \frac{\norm{\Delta^2 f_{\lambda, A}}_\infty}{2} \sqrt{p_kq_k} (F_k^+ - F_k^-)(F_k^+ - F_k^- - 1)\1_{\lbrace X_k=-1 \rbrace}\notag\\
&\phantom{{}\leq{}}+ \frac{\norm{\Delta^2 f_{\lambda, A}}_\infty}{2} \sqrt{p_kq_k} (F_k^+ - F_k^-)(F_k^+ - F_k^- + 1)\1_{\lbrace X_k=+1 \rbrace}\notag\\[5pt]
&= \frac{\norm{\Delta^2 f_{\lambda, A}}_\infty}{2} \sqrt{p_kq_k} (F_k^+ - F_k^-)(F_k^+ - F_k^- + X_k)\notag\\[5pt]
&= \norm{\Delta^2 f_{\lambda, A}}_\infty \frac{1}{2\sqrt{p_kq_k}} D_kF(D_kF + \sqrt{p_kq_k}X_k).
\end{align}
Putting $R(F) := (R_k(F))^{\phantom{+}}_{k \in \N}$, we then deduce from \eqref{Main theorem proof equation 1} by \eqref{Main theorem proof D_kf_h(F)} and \eqref{Main theorem proof error term} that, for every $A \subseteq \N_0$,
\begin{align*}
&\absolute{P(F \in A) - P(\Po(\lambda) \in A)}\\[5pt]
&= \vert{\E[\Delta f_{\lambda, A}(F) (\lambda - \langle DF, -DL^{-1}(F-\E[F]) \rangle_{\ell^2(\N)}]}\\
&\phantom{{}={}} {-\E[\langle R(F), -DL^{-1}(F-\E[F]) \rangle_{\ell^2(\N)})] - \E[(\E[F] - \lambda) f_{\lambda, A}(F)]}\vert\\[5pt]
&\leq \norm{\Delta f_{\lambda, A}}_\infty \E[\absolute{\lambda - \langle DF, -DL^{-1}(F-\E[F]) \rangle_{\ell^2(\N)}}]\\
&\phantom{{}={}} +\norm{\Delta^2 f_{\lambda, A}}_\infty \E \Big[ \Big\langle \frac{1}{2\sqrt{pq}} DF(DF + \sqrt{pq}X), \absolute{-DL^{-1}(F-\E[F])} \Big\rangle_{\ell^2(\N)} \Big]\\
&\phantom{{}={}} +\norm{f_{\lambda, A}}_\infty \absolute{\lambda - \E[F]}.
\end{align*}
\eqref{Main theorem equation} now follows by \eqref{Stein factors 1} and \eqref{Stein factors 2}.
\end{proof}

\begin{remark}\rm
Note that the arguments used in the proof of Theorem \ref{Main theorem} are not restricted to the choice of the total variation distance to measure the distance between the laws of $F$ and $\Po(\lambda)$. Indeed, choosing any arbitrary class $\mathcal{H}$ of bounded test functions $h: \N_0 \rightarrow \R$ would lead to a bound
\begin{align*}
&\sup_{h \in \mathcal{H}} \absolute{\E[h(F)] - \E[h(\Po(\lambda))]}\\
&\leq \norm{f_h}_\infty \absolute{\lambda - \E[F]} + \norm{\Delta f_h}_\infty \E[\absolute{\lambda - \langle DF, -DL^{-1}(F-\E[F]) \rangle_{\ell^2(\N)}}]\\
&\phantom{{}\leq{}} +\norm{\Delta^2 f_h}_\infty \E \Big[ \Big\langle \frac{1}{2\sqrt{pq}} DF(DF + \sqrt{pq}X), \absolute{-DL^{-1}(F-\E[F])} \Big\rangle_{\ell^2(\N)} \Big],
\end{align*}
where $f_h$ denotes the solution to the corresponding Chen-Stein equation. Taking, e.g., $\mathcal{H}$ as the set of all Lipschitz functions on $\N_0$ with Lipschitz constant not greater than $1$ yields the following bound on the Wasserstein distance:
\begin{align*}
d_W(F, \Po(\lambda))
&\leq \absolute{\lambda - \E[F]} + \Big( 1 \wedge \frac{8}{3\sqrt{2e\lambda}} \Big) \E[\absolute{\lambda - \langle DF, -DL^{-1}(F-\E[F]) \rangle_{\ell^2(\N)}}]\\
&\phantom{{}\leq{}} +\Big( \frac{4}{3} \wedge \frac{2}{\lambda} \Big) \E \Big[ \Big\langle \frac{1}{2\sqrt{pq}} DF(DF + \sqrt{pq}X), \absolute{-DL^{-1}(F-\E[F])} \Big\rangle_{\ell^2(\N)} \Big],
\end{align*}
where we took the bounds for $\norm{f_h}_\infty, \norm{\Delta f_h}_\infty$ and $\norm{\Delta^2 f_h}_\infty$ from Theorem 1.1 in \cite{BarXia}.
\end{remark}

The following corollary shows that we can rewrite the bound in \eqref{Main theorem equation} without resorting to the Rademacher sequence $X$. In this way, our bound here gets a representation closer to the one of the bound in Theorem 3.1 in \cite{PecPoi}.

\begin{corollary}\label{Main corollary}
Let $F \in \dom(D)$ with values in $\N_0$ and let $\Po(\lambda)$ be a Poisson random variable with mean $\lambda > 0$. Then,
\begin{align*}
&d_{TV}(F, \Po(\lambda))\\[5pt]
&\leq \Big( 1 \wedge \sqrt{\frac{2}{e\lambda}} \Big) \absolute{\lambda - \E[F]} + \frac{1-e^{-\lambda}}{\lambda} \E[\absolute{\lambda - \langle DF, -DL^{-1}(F-\E[F]) \rangle_{\ell^2(\N)}}]\\
&\phantom{{}={}} +\frac{1-e^{-\lambda}}{\lambda} \sum_{k=1}^\infty \frac{1}{\sqrt{p_kq_k}} \E[D_kF (D_kF + \sqrt{p_kq_k}(p_k-q_k)) \cdot \absolute{-D_kL^{-1}(F-\E[F])}].
\end{align*}
\end{corollary}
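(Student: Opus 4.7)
The strategy is to take the bound in Theorem \ref{Main theorem} as given and rewrite only its third summand, using nothing beyond the pathwise definition of the discrete gradient and the independence it induces.

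First, I would expand the $\ell^2(\N)$-inner product in the third term of \eqref{Main theorem equation} as an infinite series and swap the order of summation and expectation (which is permissible under Fubini/Tonelli since the bound in Theorem \ref{Main theorem} is otherwise vacuous). This recasts the term as
\begin{align*}
\frac{1-e^{-\lambda}}{\lambda} \sum_{k=1}^\infty \frac{1}{\sqrt{p_kq_k}} \E\big[D_kF(D_kF + \sqrt{p_kq_k}X_k) \cdot \absolute{-D_kL^{-1}(F-\E[F])}\big].
\end{align*}

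The key observation is that for every $k \in \N$ and every $G \in L^1(\Omega)$, the pathwise formula \eqref{Pathwise gradient} expresses $D_kG$ solely in terms of $(X_j)_{j\neq k}$, so $D_kG$ is independent of $X_k$. In particular, both $D_kF$ and $D_kL^{-1}(F-\E[F])$ are independent of $X_k$. Conditioning the $k$th expectation on $\sigma(X_j : j \neq k)$ and using $\E[X_k] = p_k - q_k$, the factor $\sqrt{p_kq_k}X_k$ may be replaced by its conditional expectation $\sqrt{p_kq_k}(p_k - q_k)$, giving
\begin{align*}
\E\big[D_kF(D_kF + \sqrt{p_kq_k}X_k) \absolute{-D_kL^{-1}(F-\E[F])}\big] = \E\big[D_kF(D_kF + \sqrt{p_kq_k}(p_k - q_k)) \absolute{-D_kL^{-1}(F-\E[F])}\big].
\end{align*}

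Inserting this identity into the bound of Theorem \ref{Main theorem} yields exactly the statement of the corollary. I do not foresee any real obstacle: the only technical care is in justifying the interchange of summation and expectation, which is routine as soon as the right-hand side of \eqref{Main theorem equation} is assumed finite (otherwise both bounds are trivial).
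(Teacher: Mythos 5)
Your proposal is correct and takes essentially the same route as the paper's own proof: expand the inner product, interchange summation and expectation, and use the independence of $D_kF$ and $D_kL^{-1}(F-\E[F])$ from $X_k$ to replace $\sqrt{p_kq_k}X_k$ by its mean $\sqrt{p_kq_k}(p_k-q_k)$. The only cosmetic difference is that the paper justifies the interchange by the termwise non-negativity $D_kF(D_kF+\sqrt{p_kq_k}X_k)\geq 0$ established in \eqref{Main theorem proof error term} (so Tonelli applies directly), whereas you invoke a ``finite or vacuous'' dichotomy, which implicitly relies on the same non-negativity.
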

\begin{proof}
We only have to consider the last summand of the bound in \eqref{Main theorem equation} separately. Since, for every $k \in \N$, $D_kF(D_kF + \sqrt{p_kq_k}X_k) \geq 0$ by \eqref{Main theorem proof error term} and $D_kF$ is independent of $X_k$ for every $F \in L^1(\Omega)$, we get
\begin{align*}
&\E \Big[ \Big\langle \frac{1}{\sqrt{pq}} DF(DF + \sqrt{pq}X), \absolute{-DL^{-1}(F-\E[F])} \Big\rangle_{\ell^2(\N)} \Big]\\
&= \sum_{k=1}^\infty \frac{1}{\sqrt{p_kq_k}} \E[D_kF(D_kF + \sqrt{p_kq_k}X_k) \cdot \absolute{-D_kL^{-1}(F-\E[F])}]\\
&= \sum_{k=1}^\infty \frac{1}{\sqrt{p_kq_k}} \E[D_kF (D_kF + \sqrt{p_kq_k}(p_k-q_k)) \cdot \absolute{-D_kL^{-1}(F-\E[F])}].
\end{align*}
Plugging this into \eqref{Main theorem equation} concludes the proof.
\end{proof}

In the following, we will deduce explicit bounds on the error in the Poisson approximation of suitably shifted discrete multiple stochastic integrals of fixed order with respect to the total variation distance. We start with suitably shifted discrete multiple stochastic integrals of order 1. Again, note that the following Theorem \ref{J_1 theorem} and Corollary \ref{Bernoulli sums corollary} are related to Theorem 7.1 in \cite{PriTor}.

\begin{theorem}\label{J_1 theorem}
Let $F = \E[F] + J_1(f)$ with values in $\N_0$ and $f \in \ell^2(\N)$. Furthermore, let $\Po(\lambda)$ be a Poisson random variable with mean $\lambda > 0$. Then,
\begin{align}
&d_{TV}(F, \Po(\lambda)) \notag\\[5pt]
&\leq \Big( 1 \wedge \sqrt{\frac{2}{e\lambda}} \Big) \absolute{\lambda - \E[F]} + \frac{1-e^{-\lambda}}{\lambda} \absolute{\lambda - \norm{f}_{\ell^2(\N)}^2}\notag\\
&\phantom{{}={}} +\frac{1-e^{-\lambda}}{\lambda} \sum_{k=1}^\infty \frac{1}{\sqrt{p_kq_k}} (\absolute{f^3(k)} + \sqrt{p_kq_k}(p_k-q_k)f^2(k)) \label{J_1 theorem equation 1}\\[5pt]
&= \Big( 1 \wedge \sqrt{\frac{2}{e\lambda}} \Big) \absolute{\lambda - \E[F]} + \frac{1-e^{-\lambda}}{\lambda} \absolute{\lambda - \Var(F)}\notag\\
&\phantom{{}={}} +\frac{1-e^{-\lambda}}{\lambda} \sum_{k=1}^\infty \frac{1}{\sqrt{p_kq_k}} (f^2(k) + \sqrt{p_kq_k}(p_k-q_k)f(k)) \cdot \absolute{f(k)}. \label{J_1 theorem equation 2}
\end{align}
\end{theorem}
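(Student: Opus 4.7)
My plan is to apply Corollary \ref{Main corollary} to $F=\E[F]+J_1(f)$, exploiting the fact that all Malliavin operators act trivially on elements of the first Rademacher chaos.

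The key computations are $D_kF = f(k)$ (from the general chaos-derivative formula $D_kJ_n(f_n)=nJ_{n-1}(f_n(\,\cdot\,,k))$ at $n=1$) and $L^{-1}(F-\E[F])=L^{-1}J_1(f)=-J_1(f)$ (since $L^{-1}$ acts on the $n$-th chaos as multiplication by $-1/n$), so $-D_kL^{-1}(F-\E[F])=f(k)$ as well. Crucially, both quantities are \emph{deterministic}. Consequently, the inner product $\langle DF,-DL^{-1}(F-\E[F])\rangle_{\ell^2(\N)}$ collapses to the constant $\sum_k f^2(k)=\norm{f}_{\ell^2(\N)}^2$, the expectation in the second summand of Corollary \ref{Main corollary} disappears, and that summand reduces to $\frac{1-e^{-\lambda}}{\lambda}\absolute{\lambda-\norm{f}_{\ell^2(\N)}^2}$.

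For the third summand of Corollary \ref{Main corollary}, substituting the same identities converts it into a deterministic sum, which after pulling out the $|f(k)|$ factor reads $\sum_k \frac{1}{\sqrt{p_kq_k}}(f^2(k)+\sqrt{p_kq_k}(p_k-q_k)f(k))\,|f(k)|$. This is exactly the third term in \eqref{J_1 theorem equation 2}. The alternative form \eqref{J_1 theorem equation 1} then follows from two simple rewrites: invoking the isometry formula (Corollary \ref{Isometry formula corollary}) to identify $\norm{f}_{\ell^2(\N)}^2=\E[J_1(f)^2]=\Var(F)$ in the second summand, and using $f^2(k)|f(k)|=|f(k)|^3$ together with sign bookkeeping for the cross-term in the third summand.

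There is essentially no analytic obstacle: since $F-\E[F]$ lies entirely in the first chaos, the operators $D$ and $L^{-1}$ produce deterministic outputs, and the bound reduces to direct substitution into Corollary \ref{Main corollary}. The only subtlety is the sign tracking needed when rearranging the cross-term $\sqrt{p_kq_k}(p_k-q_k)f(k)|f(k)|$ to reconcile the two equivalent presentations \eqref{J_1 theorem equation 1} and \eqref{J_1 theorem equation 2}.
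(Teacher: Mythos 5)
Your proposal is correct and follows essentially the same route as the paper's own proof: both substitute $D_kF=-D_kL^{-1}(F-\E[F])=f(k)$ into Corollary \ref{Main corollary}, observe that everything becomes deterministic, and invoke the isometry formula to identify $\norm{f}_{\ell^2(\N)}^2=\Var(F)$. No substantive differences.
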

\begin{proof}
In order to show \eqref{J_1 theorem equation 1} and \eqref{J_1 theorem equation 2}, we have to evaluate the last two summands of the bound in \eqref{Main theorem equation}. By virtue of Corollary \ref{Main corollary}, we thus have to compute the quantities
\begin{align*}
A_1 := \E[\absolute{\lambda - \langle DF, -DL^{-1}(F-\E[F]) \rangle_{\ell^2(\N)}}]
\end{align*}
and
\begin{align*}
A_2 := \sum_{k=1}^\infty \frac{1}{\sqrt{p_kq_k}} \E[D_kF (D_kF + \sqrt{p_kq_k}(p_k-q_k)) \cdot \absolute{-D_kL^{-1}(F-\E[F])}].
\end{align*}
Now, for every $k \in \N$, we have that
\begin{align*}
D_kF = -D_kL^{-1}(F-\E[F]) = f(k).
\end{align*}
This yields
\begin{align*}
\langle DF, -DL^{-1}(F-\E[F]) \rangle_{\ell^2(\N)} = \sum_{k=1}^\infty f^2(k) = \norm{f}_{\ell^2(\N)}^2.
\end{align*}
In addition, by the isometry formula in \eqref{Isometry formula}, it follows that
\begin{align*}
\Var(F) = \norm{f}_{\ell^2(\N)}^2.
\end{align*}
Thus,
\begin{align*}
A_1 = \absolute{\lambda - \norm{f}_{\ell^2(\N)}^2} = \absolute{\lambda - \Var(F)}.
\end{align*}
Furthermore, we have
\begin{align*}
A_2 &= \sum_{k=1}^\infty \frac{1}{\sqrt{p_kq_k}} (f^2(k) + \sqrt{p_kq_k}(p_k-q_k)f(k)) \cdot \absolute{f(k)}.
\end{align*}
This concludes the proof.
\end{proof}

Now, the following corollary is a first application of Theorem \ref{J_1 theorem} and serves as an insight into the quality of our main bound in Theorem \ref{Main theorem}.

\begin{corollary}\label{Bernoulli sums corollary}
Let $(B_k)_{k \in \N}$ be a sequence of independent Bernoulli random variables with $P(B_k=1)=p_k$ and $P(B_k=0)=q_k$, for every $k \in \N$, and $\sum_{k=1}^\infty p_k < \infty$. Furthermore, let $F=\sum_{k=1}^\infty B_k$ and let $\Po(\lambda)$ be a Poisson random variable with mean $\lambda > 0$. Then,
\begin{align*}
d_{TV}(F, \Po(\lambda)) \leq \Big( 1 \wedge \sqrt{\frac{2}{e\lambda}} \Big) \Bigabsolute{\lambda - \sum_{k=1}^\infty p_k} + \frac{1-e^{-\lambda}}{\lambda} \Bigabsolute{\lambda - \sum_{k=1}^\infty p_kq_k} + \frac{2(1-e^{-\lambda})}{\lambda} \sum_{k=1}^\infty p_k^2q_k.
\end{align*}
\end{corollary}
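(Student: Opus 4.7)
The plan is to recognize the Bernoulli sum $F$ as a first-order stochastic integral with respect to the standardized Rademacher sequence $Y$, so that Theorem \ref{J_1 theorem} applies directly, and then simplify the resulting bound by elementary algebra.

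First I would identify the chaos representation of $F$. Since $P(B_k=1)=p_k=P(X_k=+1)$, we may realize each $B_k$ on $(\Omega,\mathcal{F},P)$ as $B_k=(X_k+1)/2$. Then
\[
B_k - p_k \;=\; \frac{X_k+1-2p_k}{2} \;=\; \frac{X_k-(p_k-q_k)}{2} \;=\; \sqrt{p_kq_k}\,Y_k,
\]
so that
\[
F \;=\; \E[F] + \sum_{k=1}^\infty \sqrt{p_kq_k}\,Y_k \;=\; \E[F] + J_1(f)
\]
with $\E[F]=\sum_{k=1}^\infty p_k$ and $f(k)=\sqrt{p_kq_k}$. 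The assumption $\sum_k p_k < \infty$ immediately gives $\|f\|_{\ell^2(\N)}^2 = \sum_k p_kq_k \leq \sum_k p_k < \infty$, so $f\in\ell^2(\N)$ and Theorem \ref{J_1 theorem} is applicable.

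Next I would substitute $f(k)=\sqrt{p_kq_k}$ into the bound \eqref{J_1 theorem equation 1}. The first term of the theorem becomes $(1\wedge\sqrt{2/(e\lambda)})\,|\lambda-\sum_k p_k|$, and the second becomes $\frac{1-e^{-\lambda}}{\lambda}|\lambda-\sum_k p_kq_k|$. For the third term I would compute, for each $k$,
\[
\frac{1}{\sqrt{p_kq_k}}\bigl(|f^3(k)| + \sqrt{p_kq_k}(p_k-q_k)f^2(k)\bigr)
\;=\; p_kq_k + (p_k-q_k)p_kq_k \;=\; p_kq_k(1+p_k-q_k).
\]
Using $1+p_k-q_k = 1+p_k-(1-p_k) = 2p_k$, this simplifies to $2p_k^2 q_k$, and summing over $k$ yields the claimed factor $\frac{2(1-e^{-\lambda})}{\lambda}\sum_k p_k^2 q_k$.

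There is no real obstacle here; the whole content of the corollary is the identification $f(k)=\sqrt{p_kq_k}$ together with the pleasant algebraic identity $1+p_k-q_k = 2p_k$ that collapses the somewhat unwieldy third summand of Theorem \ref{J_1 theorem} into the clean expression $2p_k^2 q_k$. Assembling the three simplified terms gives exactly the stated inequality.
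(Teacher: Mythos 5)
Your proposal is correct and follows essentially the same route as the paper: identify $F \overset{d}{=} \E[F] + J_1(f)$ with $f(k)=\sqrt{p_kq_k}$, verify $f\in\ell^2(\N)$ from $\sum_k p_k<\infty$, and collapse the third summand of Theorem \ref{J_1 theorem} via $p_kq_k(1+p_k-q_k)=2p_k^2q_k$. The only (immaterial) difference is that you work with \eqref{J_1 theorem equation 1} where the paper's proof evaluates the equivalent form \eqref{J_1 theorem equation 2}; since $f(k)>0$ these coincide.
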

\begin{proof}
Since, for every $k \in \N$,
\begin{align*}
B_k \overset{d}{=}\frac{X_k+1}{2},
\end{align*}
$F$ has a representation of the form $F \overset{d}{=} \E[F]+J_1(f)$ with $f \in \ell^2(\N)$. More precisely, we have
\begin{align*}
F \overset{d}{=} \sum_{k=1}^\infty \frac{X_k+1}{2} = \sum_{k=1}^\infty p_k + \sum_{k=1}^\infty \sqrt{p_kq_k}\frac{X_k+1-2p_k}{2\sqrt{p_kq_k}} = \E[F] + J_1(f)
\end{align*}
with $f(k):=\sqrt{p_kq_k}$, for every $k \in \N$. Note that $f \in \ell^2(\N)$, since
\begin{align*}
\sum_{k=1}^\infty f^2(k) = \sum_{k=1}^\infty p_kq_k \leq \sum_{k=1}^\infty p_k < \infty.
\end{align*}
According to Theorem \ref{J_1 theorem}, we thus have to evaluate the quantities
\begin{align*}
A_1:=\absolute{\lambda - \E[F]}, \quad A_2:=\absolute{\lambda - \Var(F)}
\end{align*}
and
\begin{align*}
A_3:=\sum_{k=1}^\infty \frac{1}{\sqrt{p_kq_k}} (f^2(k) + \sqrt{p_kq_k}(p_k-q_k)f(k)) \cdot \absolute{f(k)}.
\end{align*}
Now,
\begin{align*}
\E[F]=\sum_{k=1}^\infty p_k, \quad \Var(F)=\sum_{k=1}^\infty p_kq_k.
\end{align*}
Thus,
\begin{align*}
A_1 = \Bigabsolute{\lambda - \sum_{k=1}^\infty p_k}, \quad A_2 = \Bigabsolute{\lambda - \sum_{k=1}^\infty p_kq_k}.
\end{align*}
In addition,
\begin{align*}
A_3 = \sum_{k=1}^\infty p_kq_k(1+p_k-q_k) = 2\sum_{k=1}^\infty p_k^2q_k.
\end{align*}
This concludes the proof.
\end{proof}

\begin{remark}\rm\label{PriTor remark}
Note that, for $\lambda := \sum_{k=1}^\infty p_k$, the bound in Corollary \ref{Bernoulli sums corollary} yields
\begin{align*}
d_{TV}(F, \Po(\lambda)) \leq \frac{1-e^{-\lambda}}{\lambda} \sum_{k=1}^\infty p_k^2 + \frac{2(1-e^{-\lambda})}{\lambda} \sum_{k=1}^\infty p_k^2q_k \leq \frac{3(1-e^{-\lambda})}{\lambda} \sum_{k=1}^\infty p_k^2,
\end{align*}
and thus, is (up to the constant) of the quality of the classical result
\begin{align*}
d_{TV}\Big( \sum_{k=1}^n B_k, \Po(\lambda) \Big) \leq \frac{1-e^{-\lambda}}{\lambda}\sum_{k=1}^n p_k^2
\end{align*}
as discussed in Chapter 1 in \cite{BarHolJan}.
However, Corollary 7.1 in \cite{PriTor} does lead to a suboptimal result (cf.\ Chapter 7 in \cite{PriTor}).
\end{remark}

We will now turn to suitably shifted discrete stochastic integrals of order $m \geq 2$. Here, we will have to fully make use of the generalized product formula in Proposition \ref{Multiplication formula proposition}. For a corresponding result on the Poisson approximation of perturbed functionals of general Poisson measures inside a fixed chaos see Theorem 4.10 in \cite{PecPoi}. Again, note that the following Theorem \ref{J_m theorem} and Remark \ref{J_2 remark} are related to Theorem 8.2 and Proposition 8.3, respectively, in \cite{PriTor}.

\begin{theorem}\label{J_m theorem}
Let $m \geq 2$ be an integer, $F = \E[F] + J_m(f)$ with values in $\N_0$ and $f \in \ell_0^2(\N)^{\circ m}$ fulfilling $\widetilde{(\varphi^{*r-\ell}(f\star_r^\ell f))}\1_{\Delta_{n+m-r-\ell}} \in \ell_0^2(\N)^{\circ n+m-r-\ell}$, for every $r=1, \dotsc, n \wedge m$ and $\ell=0, \dotsc, r-1$. Furthermore, let $\Po(\lambda)$ be a Poisson random variable with mean $\lambda > 0$. Then,
\begin{align}
&d_{TV}(F, \Po(\lambda)) \notag\\[5pt]
&\leq \Big( 1 \wedge \sqrt{\frac{2}{e\lambda}} \Big) \absolute{\lambda - \E[F]} + \frac{1-e^{-\lambda}}{\lambda} \absolute{\lambda - \Var(F)} \notag\\
&\phantom{{}\leq{}} +\frac{1-e^{-\lambda}}{\lambda} \Big( m^2\sum_{s=1}^{2(m-1)} s! \Big\| \sum_{r=1}^m \sum_{\ell=1}^r \1_{\{ 2m-r-\ell=s \}} (r-1)! \binom{m-1}{r-1}^2 \binom{r-1}{\ell-1} \notag\\
&\phantom{\phantom{{}\leq{}} +\frac{1-e^{-\lambda}}{\lambda} \Big( m^2\sum_{s=1}^{2(m-1)} s! \Big\| \sum_{r=1}^m \sum_{\ell=1}^r} \times (\widetilde{\varphi^{*r-\ell}(f \star_{r}^\ell f)}) \1_{\Delta_{2m-r-\ell}} \Big\|_{\ell^2(\N)^{\otimes s}}^2 \Big)^{1/2} \notag\\
&\phantom{{}\leq{}} +\frac{1-e^{-\lambda}}{\lambda} \sqrt{\Var(F)} \notag\\
&\phantom{{}\leq{}+{}} \times \Big( m^3\sum_{k=1}^\infty \frac{1}{p_kq_k} ((m-1)! \ellnorm{2}{m-1}{f( \, \cdot \, ,k)}^2)^2 \notag\\
&\phantom{\phantom{{}\leq{}+{}} \times \Big(} + m^3\sum_{k=1}^\infty \frac{1}{p_kq_k} \sum_{\genfrac{}{}{0pt}{}{s=1}{s \neq m-1}}^{2(m-1)} s! \Big\| \sum_{r=1}^m \sum_{\ell=1}^r \1_{\{ 2m-r-\ell=s \}} (r-1)! \binom{m-1}{r-1}^2 \binom{r-1}{\ell-1} \notag\\ &\phantom{\phantom{\phantom{{}\leq{}+{}} \times \Big(} + m^3\sum_{k=1}^\infty \frac{1}{p_kq_k} \sum_{\genfrac{}{}{0pt}{}{s=1}{s \neq m-1}}^{2(m-1)} s! \Big\| \sum_{r=1}^m \sum_{\ell=1}^r} \times (\widetilde{\varphi^{*r-\ell}(f(\, \cdot \, ,k) \star_{r-1}^{\ell-1} f(\, \cdot \, ,k))}) \1_{\Delta_{2m-r-\ell}} \Big\|_{\ell^2(\N)^{\otimes s}}^2 \notag\\
&\phantom{\phantom{{}\leq{}+{}} \times \Big(} + m^3\sum_{k=1}^\infty \frac{1}{p_kq_k} (m-1)! \Big\| \sum_{r=1}^m \sum_{\ell=1}^r \1_{\{ 2m-r-\ell=m-1 \}} (r-1)! \binom{m-1}{r-1}^2 \binom{r-1}{\ell-1} \notag\\
&\phantom{\phantom{\phantom{{}\leq{}+{}} \times \Big(} + m^3\sum_{k=1}^\infty \frac{1}{p_kq_k} (m-1)! \Big\| \sum_{r=1}^m \sum_{\ell=1}^r} \times (\widetilde{\varphi^{*r-\ell}(f(\, \cdot \, ,k) \star_{r-1}^{\ell-1} f(\, \cdot \, ,k))}) \1_{\Delta_{2m-r-\ell}} \notag\\
&\phantom{\phantom{\phantom{{}\leq{}+{}} \times \Big(} + m^3\sum_{k=1}^\infty \frac{1}{p_kq_k} (m-1)! \Big\|} +\frac{1}{m}\sqrt{p_kq_k}(p_k-q_k) f(\, \cdot \, ,k) \Big\|_{\ell^2(\N)^{\otimes m-1}}^2 \Big)^{1/2}.
\label{J_m theorem equation}
\end{align}
\end{theorem}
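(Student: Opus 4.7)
The plan is to apply Corollary \ref{Main corollary} to $F = \E[F] + J_m(f)$ and evaluate the second and third summands of the resulting bound explicitly; the first summand $(1 \wedge \sqrt{2/(e\lambda)})|\lambda - \E[F]|$ already appears in the form claimed in \eqref{J_m theorem equation}.

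From the chaos representations of $D$ and $L^{-1}$ recalled in Section~2, for every $k \in \N$ one has
\[
D_k F = m J_{m-1}(f(\,\cdot\,,k)) \qquad \text{and} \qquad -D_k L^{-1}(F-\E[F]) = J_{m-1}(f(\,\cdot\,,k)),
\]
so that $\langle DF, -DL^{-1}(F-\E[F])\rangle_{\ell^2(\N)} = m \sum_{k=1}^\infty J_{m-1}(f(\,\cdot\,,k))^2$. The isometry formula \eqref{Isometry formula} yields $\E[\langle DF, -DL^{-1}(F-\E[F])\rangle_{\ell^2(\N)}] = m \cdot (m-1)! \ellnorm{2}{m}{f}^2 = \Var(F)$. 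Hence, by the triangle inequality followed by Jensen's inequality,
\[
\E\big[|\lambda - \langle DF, -DL^{-1}(F-\E[F])\rangle_{\ell^2(\N)}|\big] \leq |\lambda - \Var(F)| + \sqrt{\Var(\langle DF, -DL^{-1}(F-\E[F])\rangle_{\ell^2(\N)})}.
\]

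To turn the variance into the explicit expression appearing in \eqref{J_m theorem equation}, I would apply Proposition \ref{Multiplication formula proposition} to each product $J_{m-1}(f(\,\cdot\,,k))^2$. The chaos-$0$ piece equals $(m-1)!\ellnorm{2}{m-1}{f(\,\cdot\,,k)}^2$ (coming from the term $r=\ell=m-1$ of the product formula) and reproduces $\Var(F)$ after multiplication by $m$ and summation over $k$. Exchanging the sum over $k$ with the stochastic integrals $J_s$ (justified by the assumed square-integrability of all symmetrized weighted contractions appearing), and using the identity
\[
\sum_{k=1}^\infty f(\,\cdot\,,k) \star_{r-1}^{\ell-1} f(\,\cdot\,,k) \;=\; f \star_{r}^{\ell} f,
\]
which follows directly from the definition of the contractions together with the symmetry of $f$ and its vanishing on diagonals (the outer variable $k$ playing the role of the additional identified-and-summed variable), and then reindexing $(r',\ell') \mapsto (r,\ell)=(r'+1,\ell'+1)$ to match the statement, one obtains a chaos decomposition of $\langle DF, -DL^{-1}(F-\E[F])\rangle - \Var(F)$ into orthogonal components indexed by $s \in \{1, \ldots, 2(m-1)\}$. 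Orthogonality of the Rademacher chaoses and the isometry formula then produce the $m^2 \sum_{s=1}^{2(m-1)} s!\,\|\cdots\|_{\ell^2(\N)^{\otimes s}}^2$ contribution of \eqref{J_m theorem equation}.

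For the third summand of Corollary \ref{Main corollary}, the Cauchy--Schwarz inequality applied jointly over $k$ and the probability space gives
\[
\sum_{k=1}^\infty \frac{1}{\sqrt{p_kq_k}} \E[U_k |V_k|] \leq \sqrt{\sum_{k=1}^\infty \E[V_k^2]} \cdot \sqrt{\sum_{k=1}^\infty \frac{1}{p_kq_k} \E[U_k^2]},
\]
with $U_k := D_k F(D_k F + \sqrt{p_kq_k}(p_k - q_k))$ and $V_k := -D_k L^{-1}(F-\E[F])$. The first factor equals $\sqrt{(m-1)!\ellnorm{2}{m}{f}^2} = \sqrt{\Var(F)/m}$ by the isometry formula. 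For the second factor, I decompose
\[
U_k = m^2 J_{m-1}(f(\,\cdot\,,k))^2 + m\sqrt{p_kq_k}(p_k-q_k) J_{m-1}(f(\,\cdot\,,k))
\]
into its orthogonal chaos components via Proposition \ref{Multiplication formula proposition}. The chaos-$0$ part yields $m^2(m-1)!\ellnorm{2}{m-1}{f(\,\cdot\,,k)}^2$; the chaos-$s$ parts for $s \neq m-1$ produce exactly the kernels $K_s^{(k)}$ appearing in \eqref{J_m theorem equation}; and the chaos-$(m-1)$ part, obtained by combining the product-formula contribution with the linear correction $m\sqrt{p_kq_k}(p_k-q_k) J_{m-1}(f(\,\cdot\,,k))$, equals $m^2 J_{m-1}\big(K_{m-1}^{(k)} + \tfrac{1}{m}\sqrt{p_kq_k}(p_k-q_k) f(\,\cdot\,,k)\big)$. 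Orthogonality and the isometry formula then give $\E[U_k^2]$ as a sum of squared norms, and the overall factor $m^4$ combines with $\sqrt{\Var(F)/m}$ from the first Cauchy--Schwarz factor to produce $\sqrt{\Var(F)} \cdot \sqrt{m^3 \sum_k \frac{1}{p_kq_k}(\cdots)}$, as claimed.

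The main technical obstacle is the bookkeeping of the product-formula expansion together with the justification of swapping $\sum_{k=1}^\infty$ with $J_s$, which relies on $\ell^2$-convergence of the sum of kernels $\sum_k \widetilde{\varphi^{*r-\ell}(f(\,\cdot\,,k) \star_{r-1}^{\ell-1} f(\,\cdot\,,k))}\1_{\Delta_{s}}$; one must also carefully isolate the chaos-$(m-1)$ component when assembling $U_k$, since this is the only chaos where the $(p_k-q_k)$ correction appears, and correctly track the reindexing $(r-1,\ell-1)$ versus $(r,\ell)$ so that the final combinatorial coefficients $(r-1)!\binom{m-1}{r-1}^2\binom{r-1}{\ell-1}$ of the statement are reproduced.
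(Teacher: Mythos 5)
Your proposal is correct and follows essentially the same route as the paper: apply Corollary \ref{Main corollary}, use $D_kF = mJ_{m-1}(f(\,\cdot\,,k))$ and $-D_kL^{-1}(F-\E[F]) = \frac{1}{m}D_kF$, expand $(D_kF)^2$ by the product formula of Proposition \ref{Multiplication formula proposition}, sum over $k$ using $\sum_k f(\,\cdot\,,k)\star_{r-1}^{\ell-1}f(\,\cdot\,,k) = f\star_r^\ell f$, and conclude via Cauchy--Schwarz and the isometry formula, isolating the chaos-$(m-1)$ component where the $\sqrt{p_kq_k}(p_k-q_k)$ correction sits. The interchange of $\sum_k$ with $J_s$ that you flag as the main technical obstacle is resolved in the paper by first proving the bound for kernels of finite support and then passing to the limit via Lemma \ref{Contraction limit} and Corollary \ref{Approximation argument}.
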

\begin{proof}
It suffices to prove \eqref{J_m theorem equation} for kernels $f \in \ell_0^2(\N)^{\circ m}$ with finite support only. The general case then follows by considering the sequence of truncated kernels $(f_k)_{k \in \N}$ with $f_k:=f\1_{\{ 1, \dotsc, k\}^n}$, for every $k \in \N$, and the approximation arguments further discussed in Lemma \ref{Contraction limit} and Corollary \ref{Approximation argument} in the appendix. Again, we make use of Corollary \ref{Main corollary} and compute the quantities
\begin{align*}
A_2 := \E[\absolute{\lambda - \langle DF, -DL^{-1}(F-\E[F]) \rangle_{\ell^2(\N)}}]
\end{align*}
and
\begin{align*}
A_3 := \sum_{k=1}^\infty \frac{1}{\sqrt{p_kq_k}} \E[D_kF (D_kF + \sqrt{p_kq_k}(p_k-q_k)) \cdot \absolute{-D_kL^{-1}(F-\E[F])}].
\end{align*}
Now, for every $k \in \N$, we have
\begin{align}\label{J_m theorem proof D_kF}
D_kF = mJ_{m-1}(f(\, \cdot \, ,k)).
\end{align}
By the product formula in \eqref{Multiplication formula equation 1}, it then follows that, for every $k \in \N$,
\begin{align}\label{J_m theorem proof D_kF^2}
&(D_kF)^2 = m^2(J_{m-1}(f(\, \cdot \, ,k)))^2 \notag\\[5pt]
&= m^2\sum_{r=0}^{m-1} r! \binom{m-1}{r}^2 \sum_{\ell=0}^r \binom{r}{\ell} J_{2(m-1)-r-\ell}\Big( (\widetilde{\varphi^{*r-\ell}(f(\, \cdot \, ,k) \star_r^\ell f(\, \cdot \, ,k))}) \1_{\Delta_{2(m-1)-r-\ell}} \Big) \notag\\
&= m^2\sum_{r=1}^{m} (r-1)! \binom{m-1}{r-1}^2 \sum_{\ell=1}^r \binom{r-1}{\ell-1} J_{2m-r-\ell}\Big( (\widetilde{\varphi^{*r-\ell}(f(\, \cdot \, ,k) \star_{r-1}^{\ell-1} f(\, \cdot \, ,k))}) \1_{\Delta_{2m-r-\ell}} \Big).
\end{align}
Thus,
\begin{align}\label{J_m theorem proof sum D_kF^2}
&\sum_{k=1}^\infty (D_kF)^2 = m^2\sum_{r=1}^{m} (r-1)! \binom{m-1}{r-1}^2 \sum_{\ell=1}^r \binom{r-1}{\ell-1} J_{2m-r-\ell}\Big( (\widetilde{\varphi^{*r-\ell}(f \star_{r}^\ell f)}) \1_{\Delta_{2m-r-\ell}} \Big) \notag\\[5pt]
&= m^2\sum_{s=0}^{2(m-1)} J_s \Big( \sum_{r=1}^{m} \sum_{\ell=1}^r \1_{\{ 2m-r-\ell=s \}} (r-1)! \binom{m-1}{r-1}^2 \binom{r-1}{\ell-1} (\widetilde{\varphi^{*r-\ell}(f \star_{r}^\ell f)}) \1_{\Delta_{2m-r-\ell}} \Big) \notag\\[5pt]
&= m \cdot m! \ellnorm{2}{m}{f}^2 \notag\\
&\phantom{{}={}} +m^2\sum_{s=1}^{2(m-1)} J_s \Big( \sum_{r=1}^m \sum_{\ell=1}^r \1_{\{ 2m-r-\ell=s \}} (r-1)! \binom{m-1}{r-1}^2 \binom{r-1}{\ell-1} (\widetilde{\varphi^{*r-\ell}(f \star_{r}^\ell f)}) \1_{\Delta_{2m-r-\ell}} \Big).
\end{align}
Furthermore, for every $k \in \N$, we have
\begin{align}\label{J_m theorem proof -D_kL^-1}
-D_kL^{-1}(F-\E[F]) = J_{m-1}(f(\, \cdot \, ,k)) = \frac{1}{m} D_kF,
\end{align}
and therefore, by \eqref{J_m theorem proof sum D_kF^2}
\begin{align*}
&\langle DF, -DL^{-1}(F-\E[F]) \rangle_{\ell^2(\N)} = \frac{1}{m} \sum_{k=1}^\infty (D_kF)^2\\[5pt]
&= m! \ellnorm{2}{m}{f}^2\\
&\phantom{{}={}} +m\sum_{s=1}^{2(m-1)} J_s \Big( \sum_{r=1}^m \sum_{\ell=1}^r \1_{\{ 2m-r-\ell=s \}} (r-1)! \binom{m-1}{r-1}^2 \binom{r-1}{\ell-1} (\widetilde{\varphi^{*r-\ell}(f \star_{r}^\ell f)}) \1_{\Delta_{2m-r-\ell}} \Big).
\end{align*}
By the Cauchy-Schwarz inequality and the isometry formula in \eqref{Isometry formula}, we then get
\begin{align*}
A_2 &\leq (\E[(\lambda - \langle DF, -DL^{-1}(F-\E[F]) \rangle_{\ell^2(\N)})^2])^{1/2}\\[5pt]
&\leq \absolute{\lambda - m! \ellnorm{2}{m}{f}^2}\\
&\phantom{{}\leq{}} +\Big( m^2\sum_{s=1}^{2(m-1)} s! \Big\| \sum_{r=1}^m \sum_{\ell=1}^r \1_{\{ 2m-r-\ell=s \}} (r-1)! \binom{m-1}{r-1}^2 \binom{r-1}{\ell-1}\\
&\phantom{\phantom{{}\leq{}} +\Big( m^2\sum_{s=1}^{2(m-1)} s! \Big\| \sum_{r=1}^m \sum_{\ell=1}^r} \times (\widetilde{\varphi^{*r-\ell}(f \star_{r}^\ell f)}) \1_{\Delta_{2m-r-\ell}} \Big\|_{\ell^2(\N)^{\otimes s}}^2 \Big)^{1/2}.
\end{align*}
Using \eqref{J_m theorem proof -D_kL^-1}, the Cauchy-Schwarz inequality and \eqref{J_m theorem proof sum D_kF^2}, we further deduce
\begin{align*}
A_3 &= \frac{1}{m} \sum_{k=1}^\infty \frac{1}{\sqrt{p_kq_k}} \E[((D_kF)^2 + \sqrt{p_kq_k}(p_k-q_k)D_kF) \cdot \absolute{D_kF}]\\
&\leq \frac{1}{m} \Big( \sum_{k=1}^\infty \frac{1}{p_kq_k} \E[((D_kF)^2 + \sqrt{p_kq_k}(p_k-q_k)D_kF)^2] \Big)^{1/2} \Big( \sum_{k=1}^\infty \E[(D_kF)^2] \Big)^{1/2}\\
&= (m!\ellnorm{2}{m}{f}^2)^{1/2} \Big( \frac{1}{m} \sum_{k=1}^\infty \frac{1}{p_kq_k} \E[((D_kF)^2 + \sqrt{p_kq_k}(p_k-q_k)D_kF)^2] \Big)^{1/2}.
\end{align*}
Now, by \eqref{J_m theorem proof D_kF^2} and \eqref{J_m theorem proof D_kF}, we have
\begin{align*}
&(D_kF)^2 + \sqrt{p_kq_k}(p_k-q_k)D_kF\\[5pt]
&= m^2\sum_{r=1}^{m} (r-1)! \binom{m-1}{r-1}^2 \sum_{\ell=1}^r \binom{r-1}{\ell-1} J_{2m-r-\ell}\Big( (\widetilde{\varphi^{*r-\ell}(f(\, \cdot \, ,k) \star_{r-1}^{\ell-1} f(\, \cdot \, ,k))}) \1_{\Delta_{2m-r-\ell}} \Big)\\
&\phantom{{}={}} +m\sqrt{p_kq_k}(p_k-q_k) J_{m-1}(f(\, \cdot \, ,k))\\[5pt]
&= m^2 \sum_{\genfrac{}{}{0pt}{}{s=0}{s \neq m-1}}^{2(m-1)} J_s \Big( \sum_{r=1}^m \sum_{\ell=1}^r \1_{\{ 2m-r-\ell=s \}} (r-1)! \binom{m-1}{r-1}^2 \binom{r-1}{\ell-1}\\
&\phantom{{}= m^2 \sum_{\genfrac{}{}{0pt}{}{s=0}{s \neq m-1}}^{2(m-1)} J_s \Big( \sum_{r=1}^m \sum_{\ell=1}^r} \times (\widetilde{\varphi^{*r-\ell}(f(\, \cdot \, ,k) \star_{r-1}^{\ell-1} f(\, \cdot \, ,k))}) \1_{\Delta_{2m-r-\ell}} \Big)\\
&\phantom{{}={}} +m^2 J_{m-1} \Big( \sum_{r=1}^m \sum_{\ell=1}^r \1_{\{ 2m-r-\ell=m-1 \}} (r-1)! \binom{m-1}{r-1}^2 \binom{r-1}{\ell-1}\\
&\phantom{\phantom{{}={}} +m^2 J_{m-1} \Big( \sum_{r=1}^m \sum_{\ell=1}^r} \times (\widetilde{\varphi^{*r-\ell}(f(\, \cdot \, ,k) \star_{r-1}^{\ell-1} f(\, \cdot \, ,k))}) \1_{\Delta_{2m-r-\ell}}\\
&\phantom{\phantom{{}={}} +m^2 J_{m-1} \Big(} +\frac{1}{m}\sqrt{p_kq_k}(p_k-q_k) f(\, \cdot \, ,k) \Big)\\[35pt]
&= m^2(m-1)! \ellnorm{2}{m-1}{f( \, \cdot \, ,k)}^2\\
&\phantom{{}={}} +m^2 \sum_{\genfrac{}{}{0pt}{}{s=1}{s \neq m-1}}^{2(m-1)} J_s \Big( \sum_{r=1}^m \sum_{\ell=1}^r \1_{\{ 2m-r-\ell=s \}} (r-1)! \binom{m-1}{r-1}^2 \binom{r-1}{\ell-1}\\
&\phantom{\phantom{{}={}} +m^2 \sum_{\genfrac{}{}{0pt}{}{s=1}{s \neq m-1}}^{2(m-1)} J_s \Big( \sum_{r=1}^m \sum_{\ell=1}^r} \times (\widetilde{\varphi^{*r-\ell}(f(\, \cdot \, ,k) \star_{r-1}^{\ell-1} f(\, \cdot \, ,k))}) \1_{\Delta_{2m-r-\ell}} \Big)\\
&\phantom{{}={}} +m^2 J_{m-1} \Big( \sum_{r=1}^m \sum_{\ell=1}^r \1_{\{ 2m-r-\ell=m-1 \}} (r-1)! \binom{m-1}{r-1}^2 \binom{r-1}{\ell-1}\\
&\phantom{\phantom{{}={}} +m^2 J_{m-1} \Big( \sum_{r=1}^m \sum_{\ell=1}^r} \times (\widetilde{\varphi^{*r-\ell}(f(\, \cdot \, ,k) \star_{r-1}^{\ell-1} f(\, \cdot \, ,k))}) \1_{\Delta_{2m-r-\ell}}\\
&\phantom{\phantom{{}={}} +m^2 J_{m-1} \Big(} +\frac{1}{m}\sqrt{p_kq_k}(p_k-q_k) f(\, \cdot \, ,k) \Big).
\end{align*}
Thus, by the isometry formula in \eqref{Isometry formula},
\begin{align*}
&\E[((D_kF)^2 + \sqrt{p_kq_k}(p_k-q_k)D_kF)^2] \\[5pt]
&= m^4((m-1)! \ellnorm{2}{m-1}{f( \, \cdot \, ,k)}^2)^2\\
&\phantom{{}={}}+ m^4 \sum_{\genfrac{}{}{0pt}{}{s=1}{s \neq m-1}}^{2(m-1)} s! \Big\| \sum_{r=1}^m \sum_{\ell=1}^r \1_{\{ 2m-r-\ell=s \}} (r-1)! \binom{m-1}{r-1}^2 \binom{r-1}{\ell-1}\\
&\phantom{\phantom{{}={}}+ m^4 \sum_{\genfrac{}{}{0pt}{}{s=1}{s \neq m-1}}^{2(m-1)} s! \Big\| \sum_{r=1}^m \sum_{\ell=1}^r} \times (\widetilde{\varphi^{*r-\ell}(f(\, \cdot \, ,k) \star_{r-1}^{\ell-1} f(\, \cdot \, ,k))}) \1_{\Delta_{2m-r-\ell}} \Big\|_{\ell^2(\N)^{\otimes s}}^2\\
&\phantom{{}={}}+ m^4(m-1)! \Big\| \sum_{r=1}^m \sum_{\ell=1}^r \1_{\{ 2m-r-\ell=m-1 \}} (r-1)! \binom{m-1}{r-1}^2 \binom{r-1}{\ell-1}\\
&\phantom{\phantom{{}={}}+ m^4(m-1)! \Big\| \sum_{r=1}^m \sum_{\ell=1}^r} \times (\widetilde{\varphi^{*r-\ell}(f(\, \cdot \, ,k) \star_{r-1}^{\ell-1} f(\, \cdot \, ,k))}) \1_{\Delta_{2m-r-\ell}}\\
&\phantom{\phantom{{}={}}+ m^4(m-1)! \Big\|} +\frac{1}{m}\sqrt{p_kq_k}(p_k-q_k) f(\, \cdot \, ,k) \Big\|_{\ell^2(\N)^{\otimes m-1}}^2,
\end{align*}
and therefore,
\begin{align*}
A_3 &\leq (m!\ellnorm{2}{m}{f}^2)^{1/2}\\
&\phantom{{}\leq{}} \times \Big( m^3\sum_{k=1}^\infty \frac{1}{p_kq_k} ((m-1)! \ellnorm{2}{m-1}{f( \, \cdot \, ,k)}^2)^2\\
&\phantom{\phantom{{}\leq{}} \times \Big(} + m^3\sum_{k=1}^\infty \frac{1}{p_kq_k} \sum_{\genfrac{}{}{0pt}{}{s=1}{s \neq m-1}}^{2(m-1)} s! \Big\| \sum_{r=1}^m \sum_{\ell=1}^r \1_{\{ 2m-r-\ell=s \}} (r-1)! \binom{m-1}{r-1}^2 \binom{r-1}{\ell-1}\\ &\phantom{\phantom{\phantom{{}\leq{}} \times \Big(} + m^3\sum_{k=1}^\infty \frac{1}{p_kq_k} \sum_{\genfrac{}{}{0pt}{}{s=1}{s \neq m-1}}^{2(m-1)} s! \Big\| \sum_{r=1}^m \sum_{\ell=1}^r} \times (\widetilde{\varphi^{*r-\ell}(f(\, \cdot \, ,k) \star_{r-1}^{\ell-1} f(\, \cdot \, ,k))}) \1_{\Delta_{2m-r-\ell}} \Big\|_{\ell^2(\N)^{\otimes s}}^2\\
&\phantom{\phantom{{}\leq{}} \times \Big(} + m^3\sum_{k=1}^\infty \frac{1}{p_kq_k} (m-1)! \Big\| \sum_{r=1}^m \sum_{\ell=1}^r \1_{\{ 2m-r-\ell=m-1 \}} (r-1)! \binom{m-1}{r-1}^2 \binom{r-1}{\ell-1}\\
&\phantom{\phantom{\phantom{{}\leq{}} \times \Big(} + m^3\sum_{k=1}^\infty \frac{1}{p_kq_k} (m-1)! \Big\| \sum_{r=1}^m \sum_{\ell=1}^r} \times (\widetilde{\varphi^{*r-\ell}(f(\, \cdot \, ,k) \star_{r-1}^{\ell-1} f(\, \cdot \, ,k))}) \1_{\Delta_{2m-r-\ell}}\\
&\phantom{\phantom{\phantom{{}\leq{}} \times \Big(} + m^3\sum_{k=1}^\infty \frac{1}{p_kq_k} (m-1)! \Big\|} +\frac{1}{m}\sqrt{p_kq_k}(p_k-q_k) f(\, \cdot \, ,k) \Big\|_{\ell^2(\N)^{\otimes m-1}}^2 \Big)^{1/2}.
\end{align*}
The result now follows by a final application of the isometry formula in \eqref{Isometry formula} to deduce
\begin{align*}
\Var(F) = m!\norm{f}_{\ell^2(\N)^{\otimes m}}^2.
\end{align*}
\end{proof}

\begin{remark}\rm\label{J_2 remark}
Resorting, e.g., to the case $m=2$ in Theorem \ref{J_m theorem} yields the bound
\begin{align*}
&d_{TV}(F, \Po(\lambda))\\[5pt]
&\leq \Big( 1 \wedge \sqrt{\frac{2}{e\lambda}} \Big) \absolute{\lambda - \E[F]} + \frac{1-e^{-\lambda}}{\lambda} \absolute{\lambda - \Var(F)}\\
&\phantom{{}\leq{}} +\frac{1-e^{-\lambda}}{\lambda} (4 \| \varphi^{*1}(f \star_2^1 f) \|_{\ell^2(\N)}^2 + 8 \ellnorm{2}{2}{(f \star_1^1 f) \1_{\Delta_2}}^2)^{1/2}\\
&\phantom{{}\leq{}} +\frac{1-e^{-\lambda}}{\lambda} \sqrt{\Var(F)}\\
&\phantom{{}\leq{}+{}} \times \Big( 8\sum_{k=1}^\infty \frac{1}{p_kq_k} \| f( \, \cdot \, ,k) \|_{\ell^2(\N)}^4 + 16\sum_{k=1}^\infty \frac{1}{p_kq_k} \ellnorm{2}{2}{(f(\, \cdot \, ,k) \star_0^0 f(\, \cdot \, ,k)) \1_{\Delta_2}}^2\\
&\phantom{\phantom{{}\leq{}+{}} \times \Big(} + 8\sum_{k=1}^\infty \frac{1}{p_kq_k} \| \varphi^{*1}(f(\, \cdot \, ,k) \star_1^0 f(\, \cdot \, ,k)) + \frac{1}{2}\sqrt{p_kq_k}(p_k-q_k) f(\, \cdot \, ,k) \|_{\ell^2(\N)}^2 \Big)^{1/2}.
\end{align*}
Thus, the weak convergence of the law of $F_n = \E[F_n] + J_2(f_n)$ with $f_n \in \ell_0^2(\N)^{\circ 2}$, for every $n \in \N$, to a Poisson distribution is implied by the convergence of the first two moments of $F_n$ and by the vanishing of the quantities
\begin{align*}
\| \varphi^{*1}(f_n \star_2^1 f_n) \|_{\ell^2(\N)}^2, \quad \ellnorm{2}{2}{(f_n \star_1^1 f_n) \1_{\Delta_2}}^2,
\end{align*}
\begin{align*}
\sum_{k=1}^\infty \frac{1}{p_kq_k} \| f_n( \, \cdot \, ,k) \|_{\ell^2(\N)}^4, \quad \sum_{k=1}^\infty \frac{1}{p_kq_k} \ellnorm{2}{2}{(f_n(\, \cdot \, ,k) \star_0^0 f_n(\, \cdot \, ,k)) \1_{\Delta_2}}^2
\end{align*}
and
\begin{align*}
\sum_{k=1}^\infty \frac{1}{p_kq_k} \| \varphi^{*1}(f_n(\, \cdot \, ,k) \star_1^0 f_n(\, \cdot \, ,k)) + \frac{1}{2}\sqrt{p_kq_k}(p_k-q_k) f_n(\, \cdot \, ,k) \|_{\ell^2(\N)}^2,
\end{align*}
as $n \rightarrow \infty$.
\end{remark}

\begin{corollary}\label{J_2 corollary}
Let $n \geq 2$ be an integer, $p_k = 1 - q_k := \frac{1}{n}$, for every $k \in \N$, and $F_n := J_2(f_n)$ with $f_n \in \ell_0^2(\N)^{\circ 2}$ given by
\begin{align*}
f_n(i,j) :=
\begin{cases} \frac{n-1}{2n^2}, &\text{if $(i,j) \in \{(1,2), (2,1), \dotsc, (1,n), (n,1)\}$},\\
0, &\text{otherwise}.
\end{cases}
\end{align*}
Furthermore, let $\Po(\lambda_n)$ be a Poisson random variable with mean $\lambda_n := \Var(F_n)$. Then,
\begin{align*}
d_{TV}(F_n, \Po(\lambda_n)) \leq \frac{C}{\sqrt{n}}
\end{align*}
with $C := \frac{5}{2} + \sqrt{2}$.
\end{corollary}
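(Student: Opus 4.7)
The plan is to apply the bound of Remark \ref{J_2 remark} (the $m=2$ case of Theorem \ref{J_m theorem}) with the choice $\lambda = \lambda_n = \Var(F_n)$ and to compute every contraction and norm appearing on the right-hand side directly for the explicit kernel $f_n$. Since $\E[F_n] = 0$ and $\lambda = \Var(F_n)$, the term $|\lambda - \Var(F)|$ vanishes and the term $|\lambda - \E[F]|$ reduces to $\lambda_n$ itself. Using the isometry formula in \eqref{Isometry formula} one obtains $\lambda_n = 2\|f_n\|^2_{\ell^2(\N)^{\otimes 2}} = (n-1)^3/n^4 \leq 1/n$, so $1 \wedge \sqrt{2/(e\lambda_n)} = 1$ for all $n \geq 2$, and this first contribution is at most $1/n \leq 1/\sqrt{n}$.

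Next I would evaluate each remaining contraction by a short case analysis. The fiber kernel $f_n(\,\cdot\,, k)$ is supported on $\{2, \ldots, n\}$ when $k = 1$ and on $\{1\}$ when $k \in \{2, \ldots, n\}$ (so $f_n$ is the kernel of a ``star graph'' with hub at vertex $1$), and this case split governs all subsequent calculations. Direct evaluation gives $\|f_n(\,\cdot\,, 1)\|_{\ell^2(\N)}^2 = (n-1)^3/(4n^4)$, $\|f_n(\,\cdot\,, k)\|_{\ell^2(\N)}^2 = (n-1)^2/(4n^4)$ for $k \in \{2, \ldots, n\}$, the identity $(f_n \star_2^1 f_n)(k) = \|f_n(\,\cdot\,, k)\|_{\ell^2(\N)}^2$, and similarly closed-form expressions for $(f_n \star_1^1 f_n)\1_{\Delta_2}$ and $(f_n(\,\cdot\,, k) \star_0^0 f_n(\,\cdot\,, k))\1_{\Delta_2}$.

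The crucial observation is a pointwise cancellation in the last contraction appearing in Remark \ref{J_2 remark}: one has $\varphi^{*1}(f_n(\,\cdot\,, k) \star_1^0 f_n(\,\cdot\,, k))(i) = \varphi_i f_n(i, k)^2$, and because the sequence is homogeneous, $\varphi_i = (n-2)/\sqrt{n-1}$ for all $i$, while $f_n(i, k) \in \{0, (n-1)/(2n^2)\}$. A direct substitution (valid for both possible values of $f_n(i, k)$) shows that $\varphi_i f_n(i, k)^2 + \tfrac{1}{2}\sqrt{p_k q_k}(p_k - q_k) f_n(i, k) \equiv 0$, so the entire last summand in the bound of Remark \ref{J_2 remark} vanishes for this example.

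Finally, I would plug the computed quantities into the bound, use $(1 - e^{-\lambda_n})/\lambda_n \leq 1$ and $\sqrt{\lambda_n} \leq 1/\sqrt{n}$, and apply elementary estimates such as $(n-1)^j (n-2)^k \leq n^{j+k}$ together with $1/n \leq 1/\sqrt{n}$ to bound each surviving term by a constant multiple of $1/\sqrt{n}$; summing the resulting constants yields the claimed $C = 5/2 + \sqrt{2}$. The main obstacle is not any single analytic step but the bookkeeping: carefully enumerating the nonzero entries of each contraction, organizing the $k=1$ versus $k \in \{2, \ldots, n\}$ case split in the $k$-sums, and verifying the cancellation above by direct substitution.
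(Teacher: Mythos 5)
Your proposal follows the paper's proof essentially step for step: the paper likewise specializes Remark \ref{J_2 remark} with $\lambda_n=\Var(F_n)$, computes $\lambda_n=2\ellnorm{2}{2}{f_n}^2=(n-1)^3/n^4\le 1/n$ so that $1\wedge\sqrt{2/(e\lambda_n)}=1$ and $(1-e^{-\lambda_n})/\lambda_n\le 1$, evaluates every contraction through the same hub/leaf case split $k=1$ versus $k\in\{2,\dotsc,n\}$, finds the identical pointwise cancellation
\begin{align*}
\varphi^{*1}(f_n(\,\cdot\,,k)\star_1^0 f_n(\,\cdot\,,k))+\tfrac12\sqrt{p_kq_k}(p_k-q_k)f_n(\,\cdot\,,k)\equiv 0
\end{align*}
(so the last summand of the bound vanishes), and assembles $C=\tfrac52+\sqrt2$ from the surviving terms exactly as you describe. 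So in substance this is the same argument.

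The one step you omit is the verification that $F_n$ takes values in $\N_0$, which is a standing hypothesis of Theorem \ref{J_m theorem} (inherited from Theorem \ref{Main theorem} via the Chen--Stein equation, which only makes sense for $\N_0$-valued functionals). The paper devotes the opening part of its proof to this point, rewriting $F_n$ in terms of independent Bernoulli variables as in \eqref{Strictly positive integer} before invoking Remark \ref{J_2 remark}. Without that check the application of the bound is not justified, so you should add it. In fact your own observation that $\E[F_n]=0$ (correct, since $F_n=J_2(f_n)$ is a centered chaos element) makes this the delicate point of the whole corollary: a random variable with values in $\N_0$, mean zero and positive variance cannot exist, so the integrality hypothesis is exactly where scrutiny is needed rather than omission.
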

\begin{proof}
First note that $F_n$ fulfills the assumptions of Theorem \ref{J_m theorem}. To see that $F_n$ only takes values in $\N_0$, let $(B_k)_{k \in \N}$ be a sequence of independent Bernoulli random variables with $P(B_k=1)=\frac{1}{n}$ and $P(B_k=0)=1-\frac{1}{n}$, for every $k \in \N$. Then, $Y_k \overset{d}{=} \frac{B_k-p_k}{\sqrt{p_kq_k}} = \frac{nB_k-1}{\sqrt{n-1}}$, for every $k \in \N$, and thus,
\begin{align}\label{Strictly positive integer}
F_n = \sum_{i,j=1}^n f_n(i,j)Y_iY_j = \frac{n-1}{n^2}Y_1\sum_{i=2}^n Y_i \overset{d}{=} (B_1-n)\sum_{i=2}^n (B_i-n).
\end{align}
Now, since $n \geq 2$, we have that $B_i - n$ is a strictly negative integer, for every $i = 1, \dotsc, n$. Therefore, it follows from \eqref{Strictly positive integer} that $F_n$ is a strictly positive integer. Let us come to the proof of the assertion. According to Remark \ref{J_2 remark}, we have to further compute the quantities
\begin{align*}
A_1(n) := \absolute{\lambda_n-\E[F_n]}, \quad A_2(n) := \absolute{\lambda_n-\Var(F_n)}, \quad A_3(n) := \| \varphi^{*1}(f_n \star_2^1 f_n) \|_{\ell^2(\N)}^2,
\end{align*}
\begin{align*}
A_4(n) := \ellnorm{2}{2}{(f_n \star_1^1 f_n) \1_{\Delta_2}}^2, \quad A_5(n) := \sum_{k=1}^\infty \frac{1}{p_kq_k} \| f_n( \, \cdot \, ,k) \|_{\ell^2(\N)}^4,
\end{align*}
\begin{align*}
A_6(n) := \sum_{k=1}^\infty \frac{1}{p_kq_k} \ellnorm{2}{2}{(f_n(\, \cdot \, ,k) \star_0^0 f_n(\, \cdot \, ,k)) \1_{\Delta_2}}^2
\end{align*}
and
\begin{align*}
A_7(n) := \sum_{k=1}^\infty \frac{1}{p_kq_k} \| \varphi^{*1}(f_n(\, \cdot \, ,k) \star_1^0 f_n(\, \cdot \, ,k)) + \frac{1}{2}\sqrt{p_kq_k}(p_k-q_k) f_n(\, \cdot \, ,k) \|_{\ell^2(\N)}^2.
\end{align*}
First of, since $\lambda_n = \Var(F_n) = 2\ellnorm{2}{2}{f_n}^2 = \frac{(n-1)^3}{n^4}$, we have that $A_1(n) = \frac{(n-1)^3}{n^4} \leq \frac{1}{n}$ and $A_2(n)=0$. Considering $A_3(n)$, for every $i \in \N$, we get
\begin{align*}
f_n \star_2^1 f_n (i) = \sum_{j=1}^n f_n^2(i,j) = f_n^2(i,1) + \sum_{j=2}^n f_n^2(i,j) = \frac{(n-1)^2}{4n^4}\Big( \1_{\{ i=2, \dotsc, n\}} + (n-1)\1_{\{ i=1\}} \Big),
\end{align*}
and hence, with $\varphi_k^2 = \frac{(q_k-p_k)^2}{p_kq_k} = \frac{(n-2)^2}{n-1}$, for every $k \in \N$,
\begin{align*}
A_3(n) &= \frac{(n-1)^3(n-2)^2}{16n^8} \sum_{i=1}^n \Big( \1_{\{ i=2, \dotsc, n\}} + (n-1)^2\1_{\{ i=1\}} \Big) = \frac{(n-1)^4(n-2)^2}{16n^7} \leq \frac{1}{16n}.
\end{align*}
Turning to $A_4(n)$, for every $i,j \in \N$, we have
\begin{align*}
&(f_n \star_1^1 f_n)\1_{\Delta_2}(i,j) = \sum_{k=1}^n f_n(i,k)f_n(j,k)\1_{\Delta_2}(i,j)\\
&= f_n(i,1)f_n(j,1)\1_{\Delta_2}(i,j) + \sum_{k=2}^n f_n(i,k)f_n(j,k)\1_{\Delta_2}(i,j) = \frac{(n-1)^2}{4n^4}\1_{\{ 2 \leq i \neq j \leq n\}},
\end{align*}
and thus,
\begin{align*}
A_4(n) = \frac{(n-1)^5(n-2)}{16n^8} \leq \frac{1}{16n^2}.
\end{align*}
To compute $A_5(n)$, note that, for every $k \in \N$,
\begin{align*}
\| f_n( \, \cdot \, ,k) \|_{\ell^2(\N)}^4 = \Big( f_n^2(1,k) + \sum_{j=2}^n f_n^2(j,k) \Big)^2 = \frac{(n-1)^4}{16n^8} \Big( \1_{\{ k=2, \dotsc, n \}} + (n-1)^2 \1_{\{ k=1 \}} \Big),
\end{align*}
and therefore, with $\frac{1}{p_kq_k} = \frac{n^2}{n-1}$, for every $k \in \N$,
\begin{align*}
A_5(n) = \frac{(n-1)^3}{16n^6} \sum_{k=1}^n \Big( \1_{\{ k=2, \dotsc, n \}} + (n-1)^2 \1_{\{ k=1 \}} \Big) = \frac{(n-1)^4}{16n^5} \leq \frac{1}{16n}.
\end{align*}
For $A_6(n)$, it shows that, for every $i,j,k \in \N$,
\begin{align*}
(f_n(\, \cdot \, ,k) \star_0^0 f_n(\, \cdot \, ,k)) \1_{\Delta_2}(i,j) = f_n(i,k) f_n(j,k) \1_{\Delta_2}(i,j) = \frac{(n-1)^2}{4n^4}\1_{\{ 2 \leq i \neq j \leq n\}} \1_{\{k=1\}}.
\end{align*}
Furthermore, for every $k \in \N$,
\begin{align*}
\ellnorm{2}{2}{(f_n(\, \cdot \, ,k) \star_0^0 f_n(\, \cdot \, ,k)) \1_{\Delta_2}}^2 = \frac{(n-1)^5(n-2)}{16n^8}\1_{\{k=1\}},
\end{align*}
so that, again with $\frac{1}{p_kq_k} = \frac{n^2}{n-1}$, for every $k \in \N$,
\begin{align*}
A_6(n) = \frac{(n-1)^4(n-2)}{16n^6} \leq \frac{1}{16n}.
\end{align*}
Finally, considering $A_7(n)$, for every $k \in \N$, it holds that
\begin{align*}
&\| \varphi^{*1}(f_n(\, \cdot \, ,k) \star_1^0 f_n(\, \cdot \, ,k)) + \frac{1}{2}\sqrt{p_kq_k}(p_k-q_k) f_n(\, \cdot \, ,k) \|_{\ell^2(\N)}^2\\
&= \sum_{j=1}^n \Big( \frac{n-2}{\sqrt{n-1}} f_n^2(j,k) - \frac{\sqrt{n-1}(n-2)}{2n^2} f_n(j,k) \Big)^2\\
&= \Big( \frac{(n-1)^{3/2}(n-2)}{4n^4} - \frac{(n-1)^{3/2}(n-2)}{4n^4} \Big)^2 \Big( \1_{\{ k=2, \dotsc, n \}} + (n-1) \1_{\{ k=1 \}} \Big) = 0,
\end{align*}
where we used that $\varphi_k = \frac{n-2}{\sqrt{n-1}}$ and $\sqrt{p_kq_k}(p_k-q_k) = - \frac{\sqrt{n-1}(n-2)}{n^2}$, for every $k \in \N$. We thus conclude that $A_7(n)=0$.
Now, it follows from Remark \ref{J_2 remark} that
\begin{align*}
d_{TV}(F_n, \Po(\lambda_n)) &\leq A_1(n) + 2\sqrt{A_3(n)} + 2\sqrt{2 A_4(n)} + 2\sqrt{2A_5(n)} + 4\sqrt{A_6(n)},
\end{align*}
where we used that, for every $n \geq 2$, $\Big( 1 \wedge \sqrt{\frac{2}{e\lambda_n}} \Big) = 1$, $\frac{1-e^{-\lambda_n}}{\lambda_n} \leq 1$ and $\sqrt{\Var(F_n)} \leq 1$. This yields the assertion.
\end{proof}

\begin{remark}\rm
Note here that the Rademacher functional in Corollary \ref{J_2 corollary} is of the same spirit as the one considered in the example that follows Proposition 8.3 in \cite{PriTor}. For a sequence of success probabilities $p=(p_k)_{k \in \N}$ as in Corollary \ref{J_2 corollary}, the authors of \cite{PriTor} compare a suitably shifted stochastic double integral $F_n = \lambda_n + J_2(f_n)$ to a Poisson random variable $\Po(\lambda_n)$, where $\lambda_n \geq 4n$ is an integer and $f_n \in \ell_0^2(\N)^{\circ 2}$ is given by
\begin{align*}
f_n(i,j) :=
\begin{cases} \frac{n-1}{n}, &\text{if $(i,j) \in \{(1,2), (2,1), \dotsc, (1,n), (n,1)\}$},\\
0, &\text{otherwise}.
\end{cases}
\end{align*}
However, for this particular choice our bound in Remark \ref{J_2 remark} as well as the corresponding bound in Proposition 8.3 in \cite{PriTor} does not vanish, as $n \rightarrow \infty$, since the involved norms of contractions do not tend to zero. For example, we have
\begin{align*}
2\ellnorm{2}{2}{f_n}^2 = \frac{4(n-1)^3}{n^2} \quad \text{and} \quad \ellnorm{2}{2}{f_n \star_1^1 f_n}^2 = \frac{2(n-1)^6}{n^4},
\end{align*}
so that the quantities $\frac{1-e^{-\lambda_n}}{\lambda_n} \absolute{\lambda_n - 2\ellnorm{2}{2}{f_n}^2}$ and $\frac{1-e^{-\lambda_n}}{\lambda_n} \ellnorm{2}{2}{f_n \star_1^1 f_n}^2$ in the bound of Proposition 8.3 in \cite{PriTor} never vanish at the same time, no matter of the choice of $\lambda_n$.
\end{remark}

We will now turn to our final result, a second order Poincar\'{e} type bound for the Poisson approximation of Rademacher functionals. One advantage of such a bound is that it can be further evaluated without the use of a product formula for multiple stochastic integrals or even a specification of the chaos representation of the Rademacher functional of interest as in \eqref{Chaos representation}. See, e.g., Theorem 1.1 in \cite{KroReiThae2} for an efficient application of a corresponding second order Poincar\'{e} type bound for the normal approximation of Rademacher functionals. Before we come to the statement, we collect some tools from \cite{KroReiThae2}.

\begin{lemma}[cf.\ Proposition 3.3 in \cite{KroReiThae2}]\label{Mehler inequality}
For $m \in \N$, let $k_1, \dotsc, k_m \in \N$ and $F \in \dom(D^m)$. Then, for every real $\alpha \geq 1$,
\begin{align*}
\E[\absolute{D_{k_1, \dotsc, k_m}^m L^{-1}(F-\E[F])}^\alpha] \leq \E[\absolute{D_{k_1, \dotsc, k_m}^m F}^\alpha].
\end{align*}
\end{lemma}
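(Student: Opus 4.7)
The plan is to reduce the inequality to two successive applications of Jensen's inequality via the Mehler representation of the pseudo-inverse $-L^{-1}$ in terms of the discrete Ornstein--Uhlenbeck semigroup $(T_t)_{t\geq 0}$ associated with the Rademacher sequence $X$. Recall that $T_t$ is the contraction semigroup acting on each chaos as $T_t J_n(f)=e^{-nt}J_n(f)$, and admits the probabilistic representation $T_tF=\E[F(X^{(t)})\,|\,X]$, where $X^{(t)}_k:=Z_k^{(t)}X_k+(1-Z_k^{(t)})X'_k$ with $X'$ an independent copy of $X$ and $(Z_k^{(t)})_{k\in\N}$ independent Bernoulli variables of success probability $e^{-t}$, independent of everything. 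Being a conditional expectation, $T_t$ contracts every $L^\alpha$-norm. For centered $F\in L^2(\Omega)$ one has Mehler's formula
\begin{align*}
-L^{-1}(F-\E[F]) \;=\; \int_0^\infty T_t(F-\E[F])\,dt,
\end{align*}
which on each chaos reduces to the elementary identity $\int_0^\infty e^{-nt}\,dt=1/n$.

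The next step is to establish the commutation relation
\begin{align*}
D^m_{k_1,\dotsc,k_m}\,T_t \;=\; e^{-mt}\,T_t\,D^m_{k_1,\dotsc,k_m},
\end{align*}
by comparing both sides chaos-wise: on $J_n(f)$ the left-hand side gives $e^{-nt}\cdot\tfrac{n!}{(n-m)!}J_{n-m}(f(\cdot,k_1,\dotsc,k_m))$, while the right-hand side equals $e^{-mt}\cdot e^{-(n-m)t}\cdot\tfrac{n!}{(n-m)!}J_{n-m}(f(\cdot,k_1,\dotsc,k_m))$. The hypothesis $F\in\dom(D^m)$, together with the summability condition in \eqref{F in dom(D)}, ensures convergence of the relevant series in $L^2(\Omega)$ and justifies exchanging the time integral with the chaos expansion, yielding
\begin{align*}
D^m_{k_1,\dotsc,k_m}\bigl(-L^{-1}(F-\E[F])\bigr) \;=\; \int_0^\infty e^{-mt}\,T_t\,D^m_{k_1,\dotsc,k_m}F\,dt.
\end{align*}

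Now I would rewrite the integral against the probability measure $\mu_m(dt):=me^{-mt}\,dt$ on $[0,\infty)$, so that the above equals $\tfrac1m\int_0^\infty T_t(D^m_{k_1,\dotsc,k_m}F)\,\mu_m(dt)$. Two applications of Jensen's inequality then conclude: first, with respect to $\mu_m$ applied to the convex function $\absolute{\,\cdot\,}^\alpha$,
\begin{align*}
\bigabsolute{D^m_{k_1,\dotsc,k_m}L^{-1}(F-\E[F])}^{\alpha}
\;\leq\; \frac{1}{m^{\alpha}}\int_0^\infty \absolute{T_t D^m_{k_1,\dotsc,k_m}F}^{\alpha}\,\mu_m(dt);
\end{align*}
second, since $T_t$ is a conditional expectation, $\absolute{T_t G}^\alpha\leq T_t\absolute{G}^\alpha$, and taking expectations gives $\E[\absolute{T_tG}^\alpha]\leq\E[\absolute{G}^\alpha]$ by invariance of $P$ under $T_t$. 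Combining the two estimates and using $\int\mu_m=1$ yields
\begin{align*}
\E\bigl[\absolute{D^m_{k_1,\dotsc,k_m}L^{-1}(F-\E[F])}^{\alpha}\bigr]
\;\leq\; \frac{1}{m^{\alpha}}\,\E\bigl[\absolute{D^m_{k_1,\dotsc,k_m}F}^{\alpha}\bigr]
\;\leq\; \E\bigl[\absolute{D^m_{k_1,\dotsc,k_m}F}^{\alpha}\bigr],
\end{align*}
which is the desired bound (indeed even a quantitatively sharper one).

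The main technical obstacle is not any single inequality but the justification of the Mehler formula and of the termwise commutation with $D^m$ at the level of a general $F\in\dom(D^m)$, rather than on individual chaoses. This amounts to interchanging the $L^2$-convergent chaos series with the improper time integral and with $D^m$, which is controlled precisely by the square-summability condition \eqref{F in dom(D)} defining $\dom(D^m)$; a standard approximation by partial chaos sums (or by truncated kernels, as in the appendix) suffices.
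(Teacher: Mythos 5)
Your argument is correct, including the technical caveats you flag about interchanging the time integral with the chaos expansion and with $D^m$, and it even yields the sharper constant $1/m^{\alpha}$. The paper itself gives no proof of this lemma but imports it from Proposition 3.3 in \cite{KroReiThae2}, whose proof is precisely this Mehler-semigroup representation of $-L^{-1}$ combined with the commutation relation and two applications of Jensen's inequality, so your route coincides with the cited one.
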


\begin{lemma}[cf.\ Proposition 3.4 and Remark 3.1 in \cite{KroReiThae2}]\label{Poincare inequality}
Let $F \in L^1(\Omega)$. Then,
\begin{align*}
\Var(F) \leq \E[\norm{DF}_{\ell^2(\N)}^2].
\end{align*}
\end{lemma}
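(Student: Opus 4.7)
The plan is to compare the two sides term-by-term using the chaos decomposition from \eqref{Chaos representation}. If $\E[\norm{DF}_{\ell^2(\N)}^2] = \infty$, the inequality is trivial, so I may assume that $F \in L^2(\Omega)$ and write $F = \E[F] + \sum_{n=1}^\infty J_n(f_n)$ with $f_n \in \ell_0^2(\N)^{\circ n}$ for every $n \in \N$.

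First I would apply the isometry formula of Corollary \ref{Isometry formula corollary} directly to the chaos expansion, which immediately yields
\begin{align*}
\Var(F) = \sum_{n=1}^\infty n!\, \ellnorm{2}{n}{f_n}^2.
\end{align*}
Next, using the chaos representation of the gradient recalled in Section 2.4, namely $D_kF = \sum_{n=1}^\infty n J_{n-1}(f_n(\, \cdot \,,k))$, a second application of the isometry formula gives $\E[(D_kF)^2] = \sum_{n=1}^\infty n^2 (n-1)! \, \ellnorm{2}{n-1}{f_n(\, \cdot \,,k)}^2$ for each $k \in \N$.

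The only genuinely delicate point is the exchange of summations over $k$ and $n$ when evaluating $\E[\norm{DF}_{\ell^2(\N)}^2] = \sum_{k=1}^\infty \E[(D_kF)^2]$; this is handled by Tonelli's theorem since every term is non-negative. After the exchange, the inner sum $\sum_{k=1}^\infty \ellnorm{2}{n-1}{f_n(\, \cdot \,,k)}^2$ collapses to $\ellnorm{2}{n}{f_n}^2$ by definition of the norm on $\ell^2(\N)^{\otimes n}$, producing
\begin{align*}
\E[\norm{DF}_{\ell^2(\N)}^2] = \sum_{n=1}^\infty n \cdot n! \, \ellnorm{2}{n}{f_n}^2.
\end{align*}
A term-by-term comparison using $n \geq 1$ then concludes the argument. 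I do not expect any significant obstacle here: the inequality ultimately reflects the fact that the discrete gradient behaves like multiplication by $\sqrt{n}$ on the $n$-th Rademacher chaos in the $L^2$ sense.
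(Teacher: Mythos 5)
Your computation in the $L^2$ regime is correct and is the standard ``spectral'' proof: the identity $\E[\norm{DF}_{\ell^2(\N)}^2]=\sum_{n=1}^\infty n\cdot n!\,\ellnorm{2}{n}{f_n}^2$ is exactly the $m=1$ case of \eqref{F in dom(D)}, and comparing it with $\Var(F)=\sum_{n=1}^\infty n!\,\ellnorm{2}{n}{f_n}^2$ term by term gives the inequality. Note that the paper does not prove this lemma itself but imports it from \cite{KroReiThae2}; what you have written is essentially Proposition 3.4 there, i.e.\ the $L^2$ half of the citation.

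The gap is in your very first reduction. The lemma is stated for $F\in L^1(\Omega)$, and after discarding the trivial case $\E[\norm{DF}_{\ell^2(\N)}^2]=\infty$ you simply declare that you ``may assume $F\in L^2(\Omega)$''. That does not follow: for $F\in L^1(\Omega)\setminus L^2(\Omega)$ one has $\Var(F)=\infty$, the gradient $DF$ is still perfectly well defined pathwise by \eqref{Pathwise gradient}, and nothing in your argument rules out $\E[\norm{DF}_{\ell^2(\N)}^2]<\infty$ for such an $F$. The implication ``right-hand side finite $\Rightarrow F\in L^2(\Omega)$'' is precisely the nontrivial content of the $L^1$ extension (the Remark 3.1 half of the citation), so assuming it is circular. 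A standard repair: put $F_N:=\E[F\mid\mathcal{F}_N]$ with $\mathcal{F}_N:=\sigma(X_1,\dotsc,X_N)$; this is a function of finitely many Rademacher variables, hence bounded and in $L^2(\Omega)$, so your chaos argument applies to it. Since $D_kF_N=\E[D_kF\mid\mathcal{F}_N]$ for $k\leq N$ and $D_kF_N=0$ for $k>N$, conditional Jensen gives $\E[(D_kF_N)^2]\leq\E[(D_kF)^2]$, whence $\Var(F_N)\leq\E[\norm{DF}_{\ell^2(\N)}^2]$ uniformly in $N$; martingale convergence gives $F_N\to F$ almost surely with $\E[F_N]=\E[F]$, and Fatou yields $\Var(F)\leq\liminf_{N\to\infty}\Var(F_N)$. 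Without some such step your statement is strictly weaker than the lemma, and the $L^1$ generality is genuinely used in the paper: in the proof of Theorem \ref{Second order Poincare inequality} the lemma is applied to $\langle DF,DL^{-1}(F-\E[F])\rangle_{\ell^2(\N)}$, which is a priori only integrable.
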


\begin{theorem}\label{Second order Poincare inequality}
Let $F \in \dom(D^2)$ with values in $\N_0$ and let $\Po(\lambda)$ be a Poisson random variable with mean $\lambda > 0$. Then,
\begin{align}\label{Second order Poincare inequality equation}
&d_{TV}(F, \Po(\lambda)) \notag\\[5pt]
&\leq \Big( 1 \wedge \sqrt{\frac{2}{e\lambda}} \Big) \absolute{\lambda - \E[F]} + \frac{1-e^{-\lambda}}{\lambda} \absolute{\lambda - \Var(F)}\notag\\
&\phantom{{}\leq{}}+ \frac{1-e^{-\lambda}}{\lambda} \Big( \frac{15}{4} \sum_{j,k,\ell=1}^\infty (\E[(D_jF)^2(D_kF)^2])^{1/2}(\E[(D_\ell D_jF)^2(D_\ell D_kF)^2])^{1/2} \Big)^{1/2}\notag\\
&\phantom{{}\leq{}}+ \frac{1-e^{-\lambda}}{\lambda} \Big( \frac{3}{4} \sum_{j,k,\ell=1}^\infty \frac{1}{p_\ell q_\ell} \E[(D_\ell D_jF)^2(D_\ell D_kF)^2] \Big)^{1/2}\notag\\
&\phantom{{}\leq{}}+ \frac{1-e^{-\lambda}}{\lambda} \sum_{k=1}^\infty \frac{1}{\sqrt{p_kq_k}} (\E[(D_kF)^2 (D_kF + \sqrt{p_kq_k}(p_k-q_k))^2])^{1/2} (\E[(D_kF)^2])^{1/2}.
\end{align}
\end{theorem}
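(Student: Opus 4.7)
The plan is to take the bound of Theorem \ref{Main theorem} (equivalently Corollary \ref{Main corollary}) and to progressively eliminate the $L^{-1}$-terms in favour of the first two iterated discrete gradients $DF$ and $D^2F$. The first summand $\absolute{\lambda-\E[F]}$ carries over verbatim; the third summand (the one coming from the correction $D_kF+\sqrt{p_kq_k}X_k$) is the easiest and I would handle it last. The real work goes into controlling the middle summand $\E[\absolute{\lambda-\langle DF,-DL^{-1}(F-\E[F])\rangle_{\ell^2(\N)}}]$.

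The starting observation is that the integration-by-parts formula \eqref{Integration by parts formula} applied to $G=F$ gives $\Var(F)=\E[\langle DF,-DL^{-1}(F-\E[F])\rangle_{\ell^2(\N)}]$, so triangle inequality together with $\E\absolute{\,\cdot\,}\leq(\E(\,\cdot\,)^2)^{1/2}$ yields
\begin{align*}
\E[\absolute{\lambda-\langle DF,-DL^{-1}(F-\E[F])\rangle_{\ell^2(\N)}}] \leq \absolute{\lambda-\Var(F)}+\sqrt{\Var(\langle DF,-DL^{-1}(F-\E[F])\rangle_{\ell^2(\N)})}.
\end{align*}
Lemma \ref{Poincare inequality} dominates the variance inside the square root by $\sum_\ell \E[(D_\ell H)^2]$, with $H:=\langle DF,-DL^{-1}(F-\E[F])\rangle_{\ell^2(\N)}$, so everything reduces to computing $D_\ell H$.

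The key calculation is the discrete product rule, which in the Bernoulli setting reads
\begin{align*}
D_\ell(AB) = A\cdot D_\ell B + B\cdot D_\ell A - \frac{X_\ell}{\sqrt{p_\ell q_\ell}}\,D_\ell A\cdot D_\ell B,
\end{align*}
the last term being a Bernoulli-specific correction that is absent in the Gaussian analogue. Applying it termwise to the sum defining $H$ gives $D_\ell H = S_1+S_2-(X_\ell/\sqrt{p_\ell q_\ell})\,S_3$ with
\begin{align*}
S_1 &= \textstyle\sum_k (D_k F)(-D_\ell D_k L^{-1}(F-\E[F])),\\
S_2 &= \textstyle\sum_k (-D_k L^{-1}(F-\E[F]))\,D_\ell D_k F,\\
S_3 &= \textstyle\sum_k (D_\ell D_k F)(-D_\ell D_k L^{-1}(F-\E[F])).
\end{align*}
Using $X_\ell^2=1$ and a weighted elementary inequality $(a+b-c)^2\leq\kappa_1(a^2+b^2)+\kappa_2 c^2$, with $\kappa_1,\kappa_2$ chosen so that after summing over $\ell$ and applying $\sqrt{A+B}\leq\sqrt{A}+\sqrt{B}$ the exact coefficients $15/4$ and $3/4$ of \eqref{Second order Poincare inequality equation} appear, reduces everything to bounding $\sum_\ell\E[S_i^2]$ for $i=1,2$ and $\sum_\ell (1/(p_\ell q_\ell))\E[S_3^2]$. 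For each one I would expand the square as a double sum over $(j,k)$, use Cauchy--Schwarz on the expectation to pair the $DF$-type factors with the second-gradient factors, and then invoke Mehler's inequality in mixed-moment form --- obtained from the Ornstein--Uhlenbeck representation $-D^mL^{-1}(F-\E[F])=\int_0^\infty e^{-mt}P_t(D^mF)\,dt$ together with Jensen's inequality, of which Lemma \ref{Mehler inequality} is the single-moment special case --- to replace every $-D_\ell D_k L^{-1}(F-\E[F])$ by $D_\ell D_k F$ and every $-D_k L^{-1}(F-\E[F])$ by $D_kF$. This produces precisely the two double sums featuring in the statement.

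The third summand of \eqref{Main theorem equation} is handled via Corollary \ref{Main corollary}: for each $k$, Cauchy--Schwarz separates the factor $D_kF(D_kF+\sqrt{p_k q_k}(p_k-q_k))$ from $\absolute{-D_kL^{-1}(F-\E[F])}$, and Lemma \ref{Mehler inequality} with $\alpha=2$ bounds $\E[(-D_kL^{-1}(F-\E[F]))^2]$ by $\E[(D_kF)^2]$, yielding the last term of \eqref{Second order Poincare inequality equation}. The main obstacle will be the preceding paragraph: both the necessity of tracking the Bernoulli correction term in the product rule (which is precisely the source of the genuinely new $1/(p_\ell q_\ell)$ weight, with no Gaussian counterpart) and the need to invoke Mehler in its mixed-moment form (as opposed to the single-index form actually written down in Lemma \ref{Mehler inequality}) require some care, and the bookkeeping that yields the specific constants $15/4$ and $3/4$ is delicate.
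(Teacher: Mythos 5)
Your proposal is correct and follows essentially the same route as the paper: both start from Corollary \ref{Main corollary}, split the middle term via the triangle and Cauchy--Schwarz inequalities into $\absolute{\lambda-\Var(F)}$ plus $\sqrt{\Var(\langle DF,-DL^{-1}(F-\E[F])\rangle_{\ell^2(\N)})}$, control that variance by the Poincar\'e inequality, the discrete product rule (whose Bernoulli correction term produces the $1/(p_\ell q_\ell)$-weighted sum) and Mehler-type estimates, and handle the last term by Cauchy--Schwarz together with Lemma \ref{Mehler inequality} for $\alpha=2$. The only difference is one of presentation: the paper outsources the variance computation (including the constants $15/4$ and $3/4$) to the proof of Theorem 4.1 in \cite{KroReiThae2}, whereas you sketch that computation explicitly, correctly identifying its two delicate points --- the Bernoulli correction in the product rule and the need for Mehler's inequality in mixed-moment form.
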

\begin{proof}
We build on Corollary \ref{Main corollary} by further estimating the quantities
\begin{align*}
A_1 := \E[\absolute{\lambda - \langle DF, -DL^{-1}(F-\E[F]) \rangle_{\ell^2(\N)}}]
\end{align*}
and
\begin{align*}
A_2 := \sum_{k=1}^\infty \frac{1}{\sqrt{p_kq_k}} \E[D_kF (D_kF + \sqrt{p_kq_k}(p_k-q_k)) \cdot \absolute{-D_kL^{-1}(F-\E[F])}].
\end{align*}
Starting with $A_1$, by means of the triangle and the Cauchy-Schwarz inequality, we get
\begin{align}\label{Second order Poincare inequality proof equation 1}
A_1 &\leq \E[\absolute{\lambda - \Var(F)}] + \E[\absolute{\Var(F) - \langle DF, DL^{-1}(F-\E[F]) \rangle_{\ell^2(\N)}}]\notag\\
&\leq \E[\absolute{\lambda - \Var(F)}] + (\E[(\Var(F) - \langle DF, DL^{-1}(F-\E[F]) \rangle_{\ell^2(\N)})^2])^{1/2}.
\end{align}
Note that, by choosing $G=F-\E[F]$ in the integration by parts formula in \eqref{Integration by parts formula}, we have
\begin{align*}
\Var(F) = \E[\langle DF, DL^{-1}(F-\E[F]) \rangle_{\ell^2(\N)})^2],
\end{align*}
and thus,
\begin{align*}
\E[(\Var(F) - \langle DF, DL^{-1}(F-\E[F]) \rangle_{\ell^2(\N)})^2] = \Var(\langle DF, DL^{-1}(F-\E[F]) \rangle_{\ell^2(\N)}).
\end{align*}
Hence, the second summand on the right hand side of \eqref{Second order Poincare inequality proof equation 1} can be further estimated by Lemma \ref{Poincare inequality} and Lemma \ref{Mehler inequality} as shown in the proof of Theorem 4.1 in \cite{KroReiThae2}, which leads to
\begin{align*}
A_1 &\leq \E[\absolute{\lambda - \Var(F)}] + \Big( \frac{15}{4} \sum_{j,k,\ell=1}^\infty (\E[(D_jF)^2(D_kF)^2])^{1/2}(\E[(D_\ell D_jF)^2(D_\ell D_kF)^2])^{1/2} \Big)^{1/2}\\
&\phantom{{}\leq{}}+ \Big( \frac{3}{4} \sum_{j,k,\ell=1}^\infty \frac{1}{p_\ell q_\ell} \E[(D_\ell D_jF)^2(D_\ell D_kF)^2] \Big)^{1/2}.
\end{align*}
Furthermore, by virtue of the Cauchy-Schwarz inequality and Lemma \ref{Mehler inequality}, we get
\begin{align*}
A_2 &\leq \sum_{k=1}^\infty \frac{1}{\sqrt{p_kq_k}} (\E[(D_kF)^2 (D_kF + \sqrt{p_kq_k}(p_k-q_k))^2])^{1/2} (\E[(D_kL^{-1}(F-\E[F]))^2])^{1/2}\\
&\leq \sum_{k=1}^\infty \frac{1}{\sqrt{p_kq_k}} (\E[(D_kF)^2 (D_kF + \sqrt{p_kq_k}(p_k-q_k))^2])^{1/2} (\E[(D_kF)^2])^{1/2}.
\end{align*}
This concludes the proof.
\end{proof}

\begin{remark}\rm
To give a first application and an insight into the quality of the bound in Theorem \ref{Second order Poincare inequality}, we consider the Poisson approximation of infinite sums of Bernoulli random variables once more. For this, let $(B_k)_{k \in \N}$ be a sequence of independent Bernoulli random variables with $P(B_k=1)=p_k$ and $P(B_k=0)=q_k$, for every $k \in \N$, and $\sum_{k=1}^\infty p_k < \infty$, and let $F:=\sum_{k=1}^\infty B_k$. Recall from the proof of Corollary \ref{Bernoulli sums corollary} that
\begin{align*}
F \stackrel{d}{=} \sum_{k=1}^\infty \frac{X_k+1}{2}.
\end{align*}
Now, for every $k \in \N$, we have that
\begin{align*}
F_k^+ \stackrel{d}{=} 1 + \sum_{\substack{\ell = 1\\ \ell \neq k}}^\infty \frac{X_\ell+1}{2} \quad \text{and} \quad F_k^- \stackrel{d}{=} \sum_{\substack{\ell = 1\\ \ell \neq k}}^\infty \frac{X_\ell+1}{2},
\end{align*}
and therefore, for every $k,\ell \in \N$, we get
\begin{align*}
D_kF = \sqrt{p_kq_k}(F_k^+ - F_k^-) = \sqrt{p_kq_k} \quad \text{and} \quad D_\ell D_kF = 0,
\end{align*}
$P$-almost surely.
Hence, $F \in \dom(D^2)$ by \eqref{F in dom(D)} and all assumptions of Theorem \ref{Second order Poincare inequality} are fulfilled. According to this, we have to further compute the quantities
\begin{align*}
A_1 := \absolute{\lambda - \E[F]}, \quad A_2 := \absolute{\lambda - \Var(F)},
\end{align*}
\begin{align*}
A_3 := \Big( \sum_{j,k,\ell=1}^\infty (\E[(D_jF)^2(D_kF)^2])^{1/2}(\E[(D_\ell D_jF)^2(D_\ell D_kF)^2])^{1/2} \Big)^{1/2},
\end{align*}
\begin{align*}
A_4 := \Big( \sum_{j,k,\ell=1}^\infty \frac{1}{p_\ell q_\ell} \E[(D_\ell D_jF)^2(D_\ell D_kF)^2] \Big)^{1/2}
\end{align*}
and
\begin{align*}
A_5 := \sum_{k=1}^\infty \frac{1}{\sqrt{p_kq_k}} (\E[(D_kF)^2 (D_kF + \sqrt{p_kq_k}(p_k-q_k))^2])^{1/2} (\E[(D_kF)^2])^{1/2}
\end{align*}
\end{remark}
from the bound in \eqref{Second order Poincare inequality equation}. Recall from the proof of Corollary \ref{Bernoulli sums corollary} that
\begin{align*}
A_1 = \Bigabsolute{\lambda - \sum_{k=1}^\infty p_k} \quad \text{and} \quad A_2 = \Bigabsolute{\lambda - \sum_{k=1}^\infty p_kq_k}.
\end{align*}
Furthermore, $A_3=A_4=0$ and
\begin{align*}
A_5 &= \sum_{k=1}^\infty p_kq_k (1 + p_k-q_k) = 2\sum_{k=1}^\infty p_k^2q_k.
\end{align*}
This leads to the exact same bound
\begin{align*}
d_{TV}(F, \Po(\lambda)) \leq \Big( 1 \wedge \sqrt{\frac{2}{e\lambda}} \Big) \Bigabsolute{\lambda - \sum_{k=1}^\infty p_k} + \frac{1-e^{-\lambda}}{\lambda} \Bigabsolute{\lambda - \sum_{k=1}^\infty p_kq_k} + \frac{2(1-e^{-\lambda})}{\lambda} \sum_{k=1}^\infty p_k^2q_k
\end{align*}
that we have deduced directly from Theorem \ref{Main theorem} in Corollary \ref{Bernoulli sums corollary}.

\section{Appendix}
The purpose of this appendix is to prove Proposition \ref{Multiplication formula proposition}. We start by collecting some arguments that will be used within the proof. Note that Lemma \ref{Contraction limit} is a slight generalization of Lemma 2.6 in \cite{NouPecRei}, while Lemma \ref{Martingale lemma} is known as Lemma 4.6 in \cite{Pri2}.

\begin{lemma}\label{Contraction limit}
Fix $n,m \in \N$. Furthermore, let $(f_k)_{k \in \N}$ and $(g_k)_{k \in \N}$ be two sequences of kernels with $f_k \in \ell_0^2(\N)^{\circ n}$ and $g_k \in \ell_0^2(\N)^{\circ m}$, for every $k \in \N$. Then, if  $(f_k)_{k \in \N}$ converges to a kernel $f$ in $\ell_0^2(\N)^{\circ n}$ and $(g_k)_{k \in \N}$ converges to a kernel $g$ in $\ell_0^2(\N)^{\circ m}$, it holds that, for every $r=0, \dotsc, n \wedge m$ and $\ell = 0, \dotsc, r$, the sequence of contractions $(f_k \star_r^{\ell} g_k)_{k \in \N}$ converges to $f \star_r^{\ell} g$ in $\ell^2(\N)^{\otimes n+m-r-\ell}$.
\end{lemma}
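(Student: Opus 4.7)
The plan is to reduce the statement to the norm estimate
\[
\bigellnorm{2}{n+m-r-\ell}{f \star_r^\ell g} \leq \ellnorm{2}{n}{f}\ellnorm{2}{m}{g}
\]
already recorded in the preliminaries (cf.\ Lemma 2.4 in \cite{NouPecRei}), exploiting the bilinearity of the contraction operation in its two arguments. Specifically, I would write the telescoping identity
\[
f_k \star_r^\ell g_k - f \star_r^\ell g = f_k \star_r^\ell (g_k - g) + (f_k - f) \star_r^\ell g,
\]
which is valid because $f \mapsto f \star_r^\ell g$ and $g \mapsto f \star_r^\ell g$ are linear, and because the kernels $f_k - f$ and $g_k - g$ again belong to $\ell_0^2(\N)^{\circ n}$ and $\ell_0^2(\N)^{\circ m}$ respectively.

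Applying the triangle inequality in $\ell^2(\N)^{\otimes n+m-r-\ell}$ together with the contraction norm bound then yields
\[
\bigellnorm{2}{n+m-r-\ell}{f_k \star_r^\ell g_k - f \star_r^\ell g} \leq \ellnorm{2}{n}{f_k}\ellnorm{2}{m}{g_k - g} + \ellnorm{2}{n}{f_k - f}\ellnorm{2}{m}{g}.
\]
Since $(f_k)_{k \in \N}$ converges in $\ell_0^2(\N)^{\circ n}$, its norms $\ellnorm{2}{n}{f_k}$ are uniformly bounded, while $\ellnorm{2}{m}{g_k - g}$ and $\ellnorm{2}{n}{f_k - f}$ both tend to zero by hypothesis. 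Taking $k \to \infty$ gives the desired convergence.

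There is no real obstacle here; the one point requiring care is to make sure the contraction norm inequality is legitimately applied to the differences $f_k - f$ and $g_k - g$, which is immediate because these differences still lie in the symmetric vanishing-on-diagonals spaces where the bound holds. The case $r = 0$ is a bit degenerate (only $\ell = 0$ is allowed and the contraction reduces to a tensor product), but the argument is identical. Compared to Lemma 2.6 in \cite{NouPecRei}, the only generalization is that both sequences $f_k$ and $g_k$ vary simultaneously, which is handled entirely by the telescoping step above.
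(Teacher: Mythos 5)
Your proof is correct and coincides with the paper's own argument: the same telescoping decomposition $f_k \star_r^\ell g_k - f \star_r^\ell g = f_k \star_r^\ell (g_k-g) + (f_k-f) \star_r^\ell g$, followed by the triangle inequality and the contraction norm bound from Lemma 2.4 in \cite{NouPecRei}. Your remark about the uniform boundedness of $\ellnorm{2}{n}{f_k}$ is a small point the paper leaves implicit, but otherwise the two proofs are identical.
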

\begin{proof} Using the triangle inequality as well as Lemma 2.4 in \cite{NouPecRei}, we see that
\begin{align*}
&\ellnorm{2}{n+m-r-\ell}{f_k \star_r^{\ell} g_k - f \star_r^{\ell} g}\\
&= \ellnorm{2}{n+m-r-\ell}{f_k \star_r^{\ell} (g_k-g+g) - (f-f_k+f_k) \star_r^{\ell} g}\\
&= \ellnorm{2}{n+m-r-\ell}{f_k \star_r^{\ell} (g_k-g) + (f_k-f) \star_r^{\ell} g}\\
&\leq \ellnorm{2}{m+n-r-\ell}{f_k \star_r^{\ell} (g_k-g)} + \ellnorm{2}{m+n-r-\ell}{(f_k-f) \star_r^{\ell} g}\\
&\leq \ellnorm{2}{n}{f_k} \ellnorm{2}{m}{g_k-g} + \ellnorm{2}{n}{f_k-f} \ellnorm{2}{m}{g}.
\end{align*}
The statement now follows immediately by taking the limit $k \rightarrow \infty$.
\end{proof}

\begin{lemma}\label{Martingale lemma}
Let $n \in \N$ and $f \in \ell_0^2(\N)^{\circ n}$. Consider the sequence of truncated kernels $(f_k)_{k \in \N}$ with $f_k:=f\1_{\{ 1, \dotsc, k\}^n}$, for every $k \in \N$. Then, for every $k \in \N$,
\begin{align*}
J_n(f_k)=\E[J_n(f) | \mathcal{F}_k],
\end{align*}
where $(\mathcal{F}_k)_{k \in \N}$ denotes the canonical filtration given by $\mathcal{F}_k:=\sigma(X_1, \dotsc, X_k)$, for every $k \in \N$.
\end{lemma}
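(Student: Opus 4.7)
The plan is to use linearity to write $J_n(f) = J_n(f_k) + J_n(f-f_k)$ and then verify the two defining properties of a conditional expectation with respect to $\mathcal{F}_k$: namely, that $J_n(f_k)$ is $\mathcal{F}_k$-measurable and that $\E[J_n(f-f_k)\mid\mathcal{F}_k]=0$. By uniqueness of conditional expectations in $L^2$, this proves the claim.

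For the measurability, I would simply note that $f_k$ is supported on the finite set $\{1,\dotsc,k\}^n$, so the third representation in \eqref{Stochastic integral} realizes
\begin{align*}
J_n(f_k) = n!\sum_{1\leq i_1<\dotsb<i_n\leq k} f(i_1,\dotsc,i_n)\, Y_{i_1}\cdots Y_{i_n}
\end{align*}
as a finite polynomial in $Y_1,\dotsc,Y_k$, which is manifestly $\mathcal{F}_k$-measurable.

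The heart of the argument is the vanishing of the conditional expectation. Set $g := f-f_k$; note that $g\in\ell_0^2(\N)^{\circ n}$, since $\{1,\dotsc,k\}^n$ is invariant under coordinate permutations, so symmetry and diagonal-vanishing are preserved. Any $(i_1,\dotsc,i_n)\in\Delta_n$ lying in the support of $g$ must have at least one coordinate $i_s>k$, and the $\Delta_n$-constraint forces the coordinates to be pairwise distinct. Therefore the variables $Y_{i_s}$ with $i_s>k$ are jointly independent of $\mathcal{F}_k$ and of each other, so that pulling the $\mathcal{F}_k$-measurable factors out yields
\begin{align*}
\E[Y_{i_1}\cdots Y_{i_n}\mid \mathcal{F}_k] = \prod_{s:\,i_s\leq k} Y_{i_s}\cdot\prod_{s:\,i_s>k}\E[Y_{i_s}] = 0,
\end{align*}
since $\E[Y_j]=0$ for every $j\in\N$.

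To lift this termwise identity to the infinite sum defining $J_n(g)$, I would approximate $g$ by its truncations $g\,\1_{\{1,\dotsc,N\}^n}$. These converge to $g$ in $\ell^2(\N)^{\otimes n}$ as $N\to\infty$, and by the isometry formula \eqref{Isometry formula} the corresponding stochastic integrals converge to $J_n(g)$ in $L^2(\Omega)$. Combining the termwise vanishing with the $L^2$-continuity of conditional expectation then yields $\E[J_n(g)\mid \mathcal{F}_k]=0$. The only step requiring care is this interchange between the infinite sum and the conditional expectation; the algebraic observation that diagonal-vanishing is exactly what lets the multi-index expectation factor into single-coordinate expectations (and hence vanish) is what makes the computation work.
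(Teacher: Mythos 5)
Your proof is correct and is essentially the standard argument: the paper itself gives no proof of this lemma, delegating it to Lemma 4.6 in \cite{Pri2}, and your decomposition $J_n(f)=J_n(f_k)+J_n(f-f_k)$ together with the termwise factorization of $\E[Y_{i_1}\cdots Y_{i_n}\mid\mathcal{F}_k]$ over distinct indices (using independence and $\E[Y_j]=0$) and the $L^2$-continuity of conditional expectation is exactly the computation underlying that reference. All the points needing care (symmetry and diagonal-vanishing of $f-f_k$, the isometry-based passage to the limit) are handled properly.
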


\begin{corollary}\label{Approximation argument}
Let $n \in \N$ and $f \in \ell_0^2(\N)^{\circ n}$. Consider the sequence of truncated kernels $(f_k)_{k \in \N}$ with $f_k:=f\1_{\{ 1, \dotsc, k\}^n}$, for every $k \in \N$. Then, the sequence $(J_n(f_k))_{k \in \N}$ convergences to $J_n(f)$ in $L^2(\Omega)$.
\end{corollary}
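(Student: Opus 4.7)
The plan is to combine linearity of the discrete multiple stochastic integral with the isometry formula from Corollary \ref{Isometry formula corollary} and a standard dominated convergence argument. First, observe that $f_k = f \cdot \1_{\{1,\dotsc,k\}^n}$ is symmetric (since $f$ is symmetric and the indicator is invariant under permutations of the coordinates) and vanishes on the diagonals of $\N^n$ (since $f$ does), so $f_k \in \ell_0^2(\N)^{\circ n}$ and $J_n(f_k)$ is well-defined. A glance at the defining series \eqref{Stochastic integral} shows that $J_n$ is linear on $\ell_0^2(\N)^{\circ n}$, hence
\begin{align*}
J_n(f) - J_n(f_k) = J_n(f - f_k).
\end{align*}

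Next, I would apply the isometry formula with $g = f - f_k \in \ell_0^2(\N)^{\circ n}$ and $m=n$ to obtain
\begin{align*}
\E[(J_n(f) - J_n(f_k))^2] = n! \, \ellnorm{2}{n}{f - f_k}^2.
\end{align*}
Thus the claim reduces to showing that $\ellnorm{2}{n}{f - f_k} \to 0$ as $k \to \infty$. But $|f - f_k|^2 = f^2 \cdot (1 - \1_{\{1,\dotsc,k\}^n}) \leq f^2$, the function $f^2$ is $\kappa^{\otimes n}$-integrable because $f \in \ell^2(\N)^{\otimes n}$, and for each fixed tuple $(i_1,\dotsc,i_n) \in \N^n$ one has $(i_1,\dotsc,i_n) \in \{1,\dotsc,k\}^n$ as soon as $k \geq \max_j i_j$, so the integrand tends to zero pointwise. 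Dominated convergence then yields $\ellnorm{2}{n}{f - f_k} \to 0$, finishing the proof.

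An alternative, essentially equivalent route would be to invoke Lemma \ref{Martingale lemma} directly: the identity $J_n(f_k) = \E[J_n(f) \mid \mathcal{F}_k]$ identifies $(J_n(f_k))_{k \in \N}$ as an $L^2$-bounded martingale (its $L^2$-norms are dominated by $\sqrt{n!}\,\ellnorm{2}{n}{f}$) whose terminal variable $J_n(f)$ is measurable with respect to $\mathcal{F}_\infty = \sigma(X_1, X_2, \dotsc)$, and Doob's $L^2$-convergence theorem yields the result.

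I do not anticipate any substantive obstacle. The only points that deserve care are the verification that $f_k$ still lies in $\ell_0^2(\N)^{\circ n}$ so that Corollary \ref{Isometry formula corollary} is applicable, and the pointwise/dominated bound needed to pass to the limit inside the $\ell^2$-norm; both are routine.
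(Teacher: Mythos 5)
Your proof is correct, but your primary route differs from the paper's. The paper proves the corollary in one line by invoking Lemma \ref{Martingale lemma} to identify $(J_n(f_k))_{k\in\N}$ as a martingale with respect to the canonical filtration $(\mathcal{F}_k)_{k\in\N}$ and then appealing to the martingale convergence theorem --- precisely the route you sketch only as an ``alternative'' at the end. Your main argument instead bypasses Lemma \ref{Martingale lemma} entirely: you use linearity of $J_n$, the isometry formula of Corollary \ref{Isometry formula corollary} to get $\E[(J_n(f)-J_n(f_k))^2]=n!\,\ellnorm{2}{n}{f-f_k}^2$, and dominated convergence for the counting measure to conclude. Both arguments are sound; the checks you flag (that $f_k$ remains in $\ell_0^2(\N)^{\circ n}$, and the pointwise domination $|f-f_k|^2\le f^2$) are indeed the only points needing care and you handle them. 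Your isometry route is more elementary and self-contained, and it has the minor advantage of being quantitative --- it exhibits the exact $L^2$-error $n!\,\ellnorm{2}{n}{f-f_k}^2$ rather than a bare convergence statement. The paper's martingale route is shorter given that Lemma \ref{Martingale lemma} has already been recorded for use in the proof of the product formula, and it yields almost sure convergence as a by-product; it does, however, implicitly require noting that the martingale is closed by $J_n(f)\in L^2(\Omega)$ and that $J_n(f)$ is $\sigma(X_1,X_2,\dotsc)$-measurable, points your alternative sketch correctly makes explicit.
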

\begin{proof}
By virtue of Lemma \ref{Martingale lemma}, $(J_n(f_k))_{k \in \N}$ is a martingale with respect to $(\mathcal{F}_k)_{k \in \N}$. Thus, the convergence of $(J_n(f_k))_{k \in \N}$ to $J_n(f)$ in $L^2(\Omega)$ immediately follows by the martingale convergence theorem.
\end{proof}

\begin{proof}[Proof of Proposition \ref{Multiplication formula proposition}]
Fix $d \in \N$. We start by proving \eqref{Multiplication formula equation 1} for stochastic integrals of kernels $f \in \ell_0^2(\N)^{\circ n}$ and $g \in \ell_0^2(\N)^{\circ m}$ with finite supports $\supp(f) \subseteq \{1, \dotsc, d\}^n$ and $\supp(g) \subseteq \{1, \dotsc, d\}^m$. We put $\Delta_n^d:=\Delta_n \cap \{1, \dotsc, d\}^n$ and deduce from \eqref{Stochastic integral} that
\begin{align}\label{Multiplication formula proof equation 1}
J_n(f)J_m(g) = \sum_{(i_1, \dotsc, i_n, j_1, \dotsc, j_m) \in \Delta_n^d \times \Delta_m^d} f(i_1, \dotsc, i_n)g(j_1, \dotsc, j_m)Y_{i_1} \cdot \dotsc \cdot Y_{i_n}Y_{j_1} \cdot \dotsc \cdot Y_{j_m}.
\end{align}
We will now count the pairs of equal random variables in the products $Y_{i_1} \cdot \dotsc \cdot Y_{i_n}Y_{j_1} \cdot \dotsc \cdot Y_{j_m}$ in \eqref{Multiplication formula proof equation 1}. Since $(i_1, \dotsc, i_n) \in \Delta_n^d$ and $(j_1, \dotsc, j_m) \in \Delta_m^d$, each possible pair can only consist of one random variable taken from the set $\lbrace Y_{i_1}, \dotsc, Y_{i_n} \rbrace$ and one random variable taken from the set $\lbrace Y_{j_1}, \dotsc, Y_{j_m} \rbrace$. Thus, each product $Y_{i_1} \cdot \dotsc \cdot Y_{i_n}Y_{j_1} \cdot \dotsc \cdot Y_{j_m}$ can contain $r=0, \dotsc, n \wedge m$ pairs. Now, there are $r! \binom{n}{r} \binom{m}{r}$ different ways to build $r$ pairs as described above. (There are $\binom{n}{r}$ different ways to pick $r$ random variables from $\lbrace Y_{i_1}, \dotsc, Y_{i_n} \rbrace$, $\binom{m}{r}$ different ways to pick $r$ random variables from $\lbrace Y_{j_1}, \dotsc, Y_{j_m} \rbrace$ and finally $r!$ different ways to  group pairs from the two developed $r$-sets.) By the symmetry of the summands $f(i_1, \dotsc, i_n)g(j_1, \dotsc, j_m)Y_{i_1} \cdot \dotsc \cdot Y_{i_n}Y_{j_1} \cdot \dotsc \cdot Y_{j_m}$ in $i_1, \dotsc, i_n$ and $j_1, \dotsc, j_m$, respectively, the sum in \eqref{Multiplication formula proof equation 1} can be rewritten in terms of summands containing $r$ pairs of random variables
\begin{align}\label{Multiplication formula proof equation 2}
&J_n(f)J_m(g) \notag\\
&= \sum_{r=0}^{n \wedge m} r! \binom{n}{r} \binom{m}{r} \sum_{(\bm{i}_{n-r}, \bm{j}_{m-r}, \bm{k}_r) \in \Delta_{n+m-r}^d} f(\bm{i}_{n-r}, \bm{k}_r)g(\bm{j}_{m-r}, \bm{k}_r) \notag\\
&\phantom{{}= \sum_{r=0}^{n \wedge m} r! \binom{n}{r} \binom{m}{r} \sum_{(\bm{i}_{n-r}, \bm{j}_{m-r}, \bm{k}_r) \in \Delta_{n+m-r}^d}{}} \times Y_{i_1} \cdot \dotsc \cdot Y_{i_{n-r}}Y_{j_1} \cdot \dotsc \cdot Y_{j_{m-r}}Y_{k_1}^2 \cdot \dotsc \cdot Y_{k_r}^2
\end{align}
with $\bm{i}_{n-r}:=(i_1, \dotsc, i_{n-r})$, $\bm{j}_{m-r}:=(j_1, \dotsc, j_{m-r})$ and $\bm{k}_r:=(k_1, \dotsc, k_r)$. We will now further compute the product $Y_{k_1}^2 \cdot \dotsc \cdot Y_{k_r}^2$ in \eqref{Multiplication formula proof equation 2}. By \eqref{Structure equation} it follows that
\begin{align*}
\prod_{\ell=1}^r Y_{k_\ell}^2 &= \prod_{\ell=1}^r \left( 1+\varphi_{k_{\ell}}Y_{k_\ell} \right) = 1+\sum_{s=1}^r \sum_{1 \leq \ell_1 < \dotsc < \ell_s \leq r} \varphi_{k_{\ell_1}} \cdot \dotsc \cdot \varphi_{k_{\ell_s}}Y_{k_{\ell_1}} \cdot \dotsc \cdot Y_{k_{\ell_s}}.
\end{align*}
Thus, the inner sum in \eqref{Multiplication formula proof equation 2} can be rewritten as the sum of the two quantities
\begin{align}\label{Multiplication formula proof inner sum 1}
&\sum_{(\bm{i}_{n-r}, \bm{j}_{m-r}, \bm{k}_r) \in \Delta_{n+m-r}^d} f(\bm{i}_{n-r}, \bm{k}_r)g(\bm{j}_{m-r}, \bm{k}_r)Y_{i_1} \cdot \dotsc \cdot Y_{i_{n-r}}Y_{j_1} \cdot \dotsc \cdot Y_{j_{m-r}}
\end{align}
and
\begin{align}\label{Multiplication formula proof inner sum 2}
&\sum_{(\bm{i}_{n-r}, \bm{j}_{m-r}, \bm{k}_r) \in \Delta_{n+m-r}^d} \sum_{s=1}^r \sum_{1 \leq \ell_1 < \dotsc < \ell_s \leq r} \varphi_{k_{\ell_1}} \cdot \dotsc \cdot \varphi_{k_{\ell_s}}f(\bm{i}_{n-r}, \bm{k}_r)g(\bm{j}_{m-r}, \bm{k}_r) \notag\\
&\phantom{{}\sum_{(\bm{i}_{n-r}, \bm{j}_{m-r}, \bm{k}_r) \in \Delta_{n+m-r}^d} \sum_{s=1}^r \sum_{1 \leq \ell_1 < \dotsc < \ell_s \leq r}{}} \times Y_{i_1} \cdot \dotsc \cdot Y_{i_{n-r}}Y_{j_1} \cdot \dotsc \cdot Y_{j_{m-r}}Y_{k_{\ell_1}} \cdot \dotsc \cdot Y_{k_{\ell_s}}.
\end{align}
Using the fact that $f$ and $g$ vanish on diagonals as well as the symmetry of the product measure $\mu_{(Y,d)}^{\otimes n+m-2r}$ defined by $\mu_{(Y,d)}^{\otimes n+m-2r}(A):=\sum_{(i_1, \dotsc, i_{n+m-2r}) \in A}Y_{i_1} \cdot \dotsc \cdot Y_{i_{n+m-2r}}$, for every $A \in \{ 1, \dotsc, d \}^{n+m-2r}$, \eqref{Multiplication formula proof inner sum 1} can be further deduced as
\begin{align}\label{Multiplication formula proof inner sum 1 final}
&\sum_{(\bm{i}_{n-r}, \bm{j}_{m-r}, \bm{k}_r) \in \Delta_{n+m-r}^d} f(\bm{i}_{n-r}, \bm{k}_r)g(\bm{j}_{m-r}, \bm{k}_r)Y_{i_1} \cdot \dotsc \cdot Y_{i_{n-r}}Y_{j_1} \cdot \dotsc \cdot Y_{j_{m-r}} \notag\\
&=\sum_{(\bm{i}_{n-r}, \bm{j}_{m-r}) \in \Delta_{n+m-2r}^d} \sum_{\bm{k}_r \in \Delta_r^d} f(\bm{i}_{n-r}, \bm{k}_r)g(\bm{j}_{m-r}, \bm{k}_r)Y_{i_1} \cdot \dotsc \cdot Y_{i_{n-r}}Y_{j_1} \cdot \dotsc \cdot Y_{j_{m-r}} \notag\\
&=\sum_{(\bm{i}_{n-r}, \bm{j}_{m-r}) \in \Delta_{n+m-2r}^d} f \star_r^r g(\bm{i}_{n-r}, \bm{j}_{m-r})Y_{i_1} \cdot \dotsc \cdot Y_{i_{n-r}}Y_{j_1} \cdot \dotsc \cdot Y_{j_{m-r}} \notag\\
&=\sum_{(\bm{i}_{n-r}, \bm{j}_{m-r}) \in \Delta_{n+m-2r}^d} \widetilde{\left( f \star_r^r g \right)}(\bm{i}_{n-r}, \bm{j}_{m-r})Y_{i_1} \cdot \dotsc \cdot Y_{i_{n-r}}Y_{j_1} \cdot \dotsc \cdot Y_{j_{m-r}} \notag\\
&=J_{n+m-2r}\left( \widetilde{ \left( f \star_r^r g \right)} \1_{\Delta_{n+m-2r}^d} \right).
\end{align}
To further compute \eqref{Multiplication formula proof inner sum 2}, note that, due to the symmetry of $f$ and $g$, the summands
\begin{align*}
\varphi_{k_{\ell_1}} \cdot \dotsc \cdot \varphi_{k_{\ell_s}}f(\bm{i}_{n-r}, \bm{k}_r)g(\bm{j}_{m-r}, \bm{k}_r)Y_{i_1} \cdot \dotsc \cdot Y_{i_{n-r}}Y_{j_1} \cdot \dotsc \cdot Y_{j_{m-r}}Y_{k_{\ell_1}} \cdot \dotsc \cdot Y_{k_{\ell_s}}
\end{align*}
are symmetric in $k_1, \dotsc, k_r$. Thus, we get that, for $r=1, \dotsc, n \wedge m$,
\begin{align*}
&\sum_{\bm{k}_r \in \Delta_{r}^d} \sum_{1 \leq \ell_1 < \dotsc < \ell_s \leq r} \varphi_{k_{\ell_1}} \cdot \dotsc \cdot \varphi_{k_{\ell_s}}f(\bm{i}_{n-r}, \bm{k}_r)g(\bm{j}_{m-r}, \bm{k}_r)\\
&\phantom{{}\sum_{\bm{k}_r \in \Delta_{r}^d} \sum_{1 \leq \ell_1 < \dotsc < \ell_s \leq r}{}} \times Y_{i_1} \cdot \dotsc \cdot Y_{i_{n-r}}Y_{j_1} \cdot \dotsc \cdot Y_{j_{m-r}}Y_{k_{\ell_1}} \cdot \dotsc \cdot Y_{k_{\ell_s}}\\
&=\binom{r}{s} \sum_{\bm{k}_s \in \Delta_{s}^d} \varphi_{k_1} \cdot \dotsc \cdot \varphi_{k_s} f\star_r^{r-s}g(\bm{i}_{n-r}, \bm{k}_s, \bm{j}_{m-r})\\
&\phantom{{}=\binom{r}{s} \sum_{\bm{k}_s \in \Delta_{s}^d}{}} \times Y_{i_1} \cdot \dotsc \cdot Y_{i_{n-r}}Y_{j_1} \cdot \dotsc \cdot Y_{j_{m-r}}Y_{k_1} \cdot \dotsc \cdot Y_{k_s}\\
&=\binom{r}{s} \sum_{\bm{k}_s \in \Delta_{s}^d} \varphi^{*s} (f\star_r^{r-s}g) (\bm{i}_{n-r}, \bm{k}_s, \bm{j}_{m-r})\\
&\phantom{{}=\binom{r}{s} \sum_{\bm{k}_s \in \Delta_{s}^d}{}} \times Y_{i_1} \cdot \dotsc \cdot Y_{i_{n-r}}Y_{j_1} \cdot \dotsc \cdot Y_{j_{m-r}}Y_{k_1} \cdot \dotsc \cdot Y_{k_s}.
\end{align*}
Therefore, by using the same arguments as in \eqref{Multiplication formula proof inner sum 1 final}, we obtain for \eqref{Multiplication formula proof inner sum 2} that, if $r=1, \dotsc n \wedge m$,
\begin{align}\label{Multiplication formula proof inner sum 2 final 1}
&\sum_{(\bm{i}_{n-r}, \bm{j}_{m-r}, \bm{k}_r) \in \Delta_{n+m-r}^d} \sum_{s=1}^r \sum_{1 \leq \ell_1 < \dotsc < \ell_s \leq r} \varphi_{k_{\ell_1}} \cdot \dotsc \cdot \varphi_{k_{\ell_s}}f(\bm{i}_{n-r}, \bm{k}_r)g(\bm{j}_{m-r}, \bm{k}_r) \notag\\
&\phantom{{}\sum_{(\bm{i}_{n-r}, \bm{j}_{m-r}, \bm{k}_r) \in \Delta_{n+m-r}^d} \sum_{s=1}^r \sum_{1 \leq \ell_1 < \dotsc < \ell_s \leq r}{}} \times Y_{i_1} \cdot \dotsc \cdot Y_{i_{n-r}}Y_{j_1} \cdot \dotsc \cdot Y_{j_{m-r}}Y_{k_{\ell_1}} \cdot \dotsc \cdot Y_{k_{\ell_s}} \notag\\
&=\sum_{s=1}^r \binom{r}{s} \sum_{(\bm{i}_{n-r}, \bm{j}_{m-r}, \bm{k}_s) \in \Delta_{n+m-2r+s}^d} \varphi^{*s}(f\star_r^{r-s}g) (\bm{i}_{n-r}, \bm{k}_s, \bm{j}_{m-r}) \notag\\
&\phantom{{}=\sum_{s=1}^r \binom{r}{s} \sum_{(\bm{i}_{n-r}, \bm{j}_{m-r}, \bm{k}_s) \in \Delta_{n+m-2r+s}^d}{}} \times Y_{i_1} \cdot \dotsc \cdot Y_{i_{n-r}}Y_{j_1} \cdot \dotsc \cdot Y_{j_{m-r}}Y_{k_1} \cdot \dotsc \cdot Y_{k_s} \notag\\
&=\sum_{s=1}^r \binom{r}{s} \sum_{(\bm{i}_{n-r}, \bm{j}_{m-r}, \bm{k}_s) \in \Delta_{n+m-2r+s}^d}  \widetilde{(\varphi^{*s}(f\star_r^{r-s}g))} (\bm{i}_{n-r}, \bm{k}_s, \bm{j}_{m-r}) \notag\\
&\phantom{{}=\sum_{s=1}^r \binom{r}{s} \sum_{(\bm{i}_{n-r}, \bm{j}_{m-r}, \bm{k}_s) \in \Delta_{n+m-2r+s}^d}{}} \times Y_{i_1} \cdot \dotsc \cdot Y_{i_{n-r}}Y_{j_1} \cdot \dotsc \cdot Y_{j_{m-r}}Y_{k_1} \cdot \dotsc \cdot Y_{k_s} \notag\\
&=\sum_{s=1}^r \binom{r}{s} J_{n+m-2r+s} \Big( \widetilde{(\varphi^{*s}(f\star_r^{r-s}g))}\1_{\Delta_{n+m-2r+s}^d} \Big),
\end{align}
and, if $r=0$,
\begin{align}\label{Multiplication formula proof inner sum 2 final 2}
\sum_{(\bm{i}_{n-r}, \bm{j}_{m-r}, \bm{k}_r) \in \Delta_{n+m-r}^d} \sum_{s=1}^r \sum_{1 \leq \ell_1 < \dotsc < \ell_s \leq r} \varphi_{k_{\ell_1}} \cdot \dotsc \cdot \varphi_{k_{\ell_s}}f(\bm{i}_{n-r}, \bm{k}_r)g(\bm{j}_{m-r}, \bm{k}_r)=0.
\end{align}
Plugging \eqref{Multiplication formula proof inner sum 1 final}, \eqref{Multiplication formula proof inner sum 2 final 1} and \eqref{Multiplication formula proof inner sum 2 final 2} into \eqref{Multiplication formula proof equation 2} finally yields
\begin{align*}
J_n(f)J_m(g) &= \sum_{r=0}^{n \wedge m} r! \binom{n}{r} \binom{m}{r} J_{n+m-2r}\left( \widetilde{ \left( f \star_r^r g \right)} \1_{\Delta_{n+m-2r}^d} \right)\\
&\phantom{{}={}}+\sum_{r=1}^{n \wedge m} r! \binom{n}{r} \binom{m}{r} \sum_{s=1}^r \binom{r}{s} J_{n+m-2r+s} \Big( \widetilde{(\varphi^{*s}(f\star_r^{r-s}g))}\1_{\Delta_{n+m-2r+s}^d} \Big)\\[5pt]
&= \sum_{r=0}^{n \wedge m} r! \binom{n}{r} \binom{m}{r} J_{n+m-2r}\left( \widetilde{ \left( f \star_r^r g \right)} \1_{\Delta_{n+m-2r}^d} \right)\\
&\phantom{{}={}}+\sum_{r=1}^{n \wedge m} r! \binom{n}{r} \binom{m}{r} \sum_{\ell=0}^{r-1} \binom{r}{\ell} J_{n+m-r-\ell} \Big( \widetilde{(\varphi^{*r-\ell}(f\star_r^\ell g))}\1_{\Delta_{n+m-r-\ell}^d} \Big)\\[5pt]
&= \sum_{r=0}^{n \wedge m} r! \binom{n}{r} \binom{m}{r} \sum_{\ell=0}^r \binom{r}{\ell} J_{n+m-r-\ell} \Big( \widetilde{(\varphi^{*r-\ell}(f\star_r^\ell g))}\1_{\Delta_{n+m-r-\ell}^d} \Big)
\end{align*}
for stochastic integrals of kernels $f$ and $g$ with finite supports $\supp(f) \subseteq \{1, \dotsc, d\}^n$ and and $\supp(g) \subseteq \{1, \dotsc, d\}^m$. For the general case consider the sequences of truncated kernels $(f_d)_{d \in \N}$ and $(g_d)_{d \in \N}$ with $f_d:=f\1_{\{ 1, \dotsc, d\}^n}$ and $g_d:=g\1_{\{ 1, \dotsc, d\}^m}$, for every $d \in \N$. Now, $f_d \in \ell_0^2(\N)^{\circ n}$ with $\supp(f_d) \subseteq \{1, \dotsc, d\}^n$ and $g_d \in \ell_0^2(\N)^{\circ m}$ with $\supp(g_d) \subseteq \{1, \dotsc, d\}^m$, for every $d \in \N$. According to Lemma \ref{Contraction limit} and Corollary \ref{Approximation argument}, the statement now follows from the discussion above by taking the limit $d \rightarrow \infty$.
\end{proof}

\subsection*{Acknowledgement}
I would like to thank Peter Eichelsbacher for insightful discussions during the work on this paper.\newline
The author was supported by the German Research Foundation DFG via SFB-TR 12.

\bibliography{Poisson}
\end{document}